\newtheorem{theorem}{Theorem}[section]
\newtheorem{lemma}[theorem]{Lemma}
\newtheorem{corollary}[theorem]{Corollary}
\newtheorem{proposition}[theorem]{Proposition}
\newtheorem{example}[theorem]{Example}
\theoremstyle{definition}
\newtheorem{definition}[theorem]{Definition}
\newtheorem{remark}[theorem]{Remark}
\numberwithin{equation}{section}
\begin{document}

\title[On the sum of simultaneously  proximinal sets  ]
{{\bf On the sum of simultaneously proximinal sets}}

\author{Longfa Sun }

\address{ Longfa Sun:  School of Mathematics and Physics, North China Electric Power University,
 Baoding 071003,    P.R. China}

\email{sun.longfa@163.com\;\;(L. Sun)}

\author{Yuqi Sun}

\address{ Yuqi Sun:  School of Mathematical Science, Xiamen University,
 Xiamen 361005,    P.R. China}

\email{sunyuqisyq@qq.com\;\;(Y. Sun)}

\author{Wen Zhang}

\address{ Wen Zhang:  School of Mathematical Science, Xiamen University,
 Xiamen 361005,    P.R. China}

\email{wenzhang@xmu.edu.cn\;\;(W. Zhang) Corresponding author}

\author{Zheming Zheng}

\address{ Zheming Zheng:  School of Mathematical Science, Xiamen University,
 Xiamen 361005,    P.R. China}

\email{3184532127@qq.com\;\;(Z. Zheng)}

\thanks{Longfa Sun's research was supported by the Fundamental Research Funds for the Central Universities 2019MS121}
\thanks {Wen Zhang's research was supported by the National Natural Science Foundation of China, No: 11731010.}

\date{}

\begin{abstract}
In this paper, we show that the sum of a compact convex subset and a simultaneously $\tau$-strongly proximinal convex subset (resp. simultaneously approximatively $\tau$-compact convex subset) of a Banach space X is simultaneously $\tau$-strongly proximinal (resp. simultaneously approximatively $\tau$-compact ), and the sum of weakly compact convex subset and a simultaneously approximatively weakly compact convex subset of X is still  simultaneously approximatively weakly compact, where $\tau$ is the norm or the weak topology. Moreover, some related results on the sum of simultaneously proximinal subspaces are presented.

\end{abstract}

\keywords{Simultaneous strong proximinality, simultaneous approximative compactness, compact sets, Banach space}

\subjclass{41A65, 46B20.}

\maketitle

\section{Introduction}
Let $X$ be a real Banach space and $C$ a nonempty closed subset of $X$.  An element $y_0\in C$ is called a best approximation to $x$ from $C$ if
\[\|x-y_0\|=\inf_{y\in C}\|x-y\|\equiv d(x,C).\] For any $x\in X$,
let $P_C(x)=\{y\in C: \|x-y\|=d(x,C)\}$. $C$ is said to be $proximinal$ if the set $P_C(x)$ is nonempty for every $x$ in $X$.

For a given bounded subset $A$, all elements of $A$ might be approximated simultaneously by a single element of $C$. This type of problem arises when a function being approximated is not known exactly but is known to belong to a set\cite{{Ma},{S}}.

For a bounded subset $A\subset X$, an element $y_0\in C$ is called a best simultaneous approximation of $A$ from $C$  if
\[\sup\limits_{a\in A}\|a-y_0\|=\inf\limits_{y\in C}\sup\limits_{a\in A}\|a-y\|\equiv d(A,C).\]
For any bounded subset $A\subset X$, let $P_C(A)=\{y\in C:\sup\limits_{a\in A}\|a-y\|=d(A,C)\}$. $C$ is said to be simultaneously proximinal if $P_C(A)$ is nonempty for  every bounded subset $A\subset X$ \cite{N}.
For $\delta>0$, let $P_C(A,\delta)=\{y\in C:\sup\limits_{a\in A}\|a-y\|<d(A,C)+\delta\}$. A sequence $\{y_n\}\subset C$ is called $minimizing$ for $A\subset X$ if $\sup\limits_ {a\in A}\|a-y_n\|\rightarrow d(A,C)$.

Taking the set $A$ to be a singleton, it follows that simultaneously proximinal sets are proximinal. It is known that any weakly compact subsets or any reflexive subspaces of $X$ are simultaneously proximinal\cite{{M},{R}}.

The notions of simultaneous approximative compactness and simultaneous strong proximinality were introduced by Gupta and Narang \cite{G}. We extend those as following. In this paper, unless otherwise mentioned, we denote by $\tau$ either the norm or the weak topology on $X$. As usual, in case $\tau$ is the norm topology, we omit it.

\begin{definition}
A subset $C$ of $X$ is said to be simultaneously approximatively $\tau$-compact if for each bounded set $A\subset X$, every minimizing sequence $\{y_n\}\subset C$ for $A$ has a subsequence $\tau$-convergent to an element in $C$.
\end{definition}
\begin{definition}
A $\tau$-closed set $C$ of $X$ is said to be simultaneously $\tau$-strongly proximinal if $C$ is simultaneously   proximinal for each bounded set $A\subset X$ and for any $\tau$-neighbourhood $V$ of $0$ in $X$, there exists $\delta>0$ such that $P_C(A,\delta)\subset P_C(A)+V$.
\end{definition}

The following question was raised by Cheney and Wulbert in \cite{C}:

If $F$ and $G$ are proximinal subspaces of a Banach space $X$, and $F+G$ is closed, does it follow that $F+G$ is proximinal in $X$?

In\cite{F}, Feder gave a negative answer to this problem and proved that if $F$ is reflexive and $G$ is proximinal such that $F+G$ is closed and $F\cap G$ is finite dimensional, then $F+G$ is proximinal. Lin\cite{L} , Deeb and Khalil \cite{D} proved that the condition  ``$F\cap G$ is finite dimensional'' can be dropped and the above conclusion still established.

Rawashdeh, Al-Sharif and Domi \cite{R} generalized the Feder's result to the sum of simultaneously proximinal subspaces and proved
if $F$ is reflexive and $G$ is simultaneously proximinal satisfying $F+G$ is closed such that $F\cap G$ is finite dimensional then $F+G$ is simultaneously proximinal. Meng, Luo, and Shi \cite{M} proved that a weakly compact convex subset and a simultaneously proximianl convex subset is simultaneously proximinal. This can be regarded as a localized version as the above conclusions. Furthermore, Gupta and Narang \cite{G} generalized  Rawashdeh's result to the sum of simultaneously strongly proximinal subspaces. For the recent development of this topic, we refer to \cite{{rao1},{rao2}} and references therein.

In this paper, we shall study the sum of simultaneously proximinal subsets of a Banach space $X$. We prove that the sum of a compact convex subset and a simultaneously $\tau$-strongly proximinal convex subset (resp. simultaneously approximatively $\tau$-compact convex subset) of $X$ is simultaneously $\tau$-strongly proximinal (resp. simultaneously approximatively $\tau$-compact ), and the sum of weakly compact convex subset and a simultaneously approximatively weakly compact convex subset of $X$ is simultaneously approximatively weakly compact. As an application, some related results on the sum of simultaneously proximinal subspaces are presented.

All symbols and notations in this paper are standard. We use $X$ to denote a real Banach space. $B_X$ (resp. $S_X$) stands for the closed unit ball (resp. the unit sphere) of $X$. For a subset $A\subset X$,  ${\rm co}(A)$ denotes the convex hull of $A$.

\section{General results}
In this section, we consider the simultaneous proximinality of $\tau$-closed subsets in a Banach space. It was shown that a reflexive subspace or a weakly compact subset of a Banach space $X$ is simultaneously proximinal \cite{{M},{R}}, and a finite dimensional subspace is simultaneously approximatively compact \cite{G}. We will show that every $\tau$-compact subsets are $\tau$-simultaneously approximatively compact, and we shall characterize reflexive spaces from simultaneous proximinality point of view.

\begin{proposition}
Let $C$ be a nonempty set of a Banach space $X$. Then\\
(1) $C$ is simultaneously proximinal if and only if $z+C$ is simultaneously proximinal for any $z\in X$, and if and only if $\lambda C$ is simultaneously proximinal for any $\lambda>0$.\\
(2) $C$ is simultaneously $\tau$-strongly proximinal if and only if $z+C$ is simultaneously $\tau$-strongly proximinal for any $z\in X$, and if and only if $\lambda C$ is simultaneously $\tau$-strongly proximinal for any $\lambda>0$.\\
(3) $C$ is simultaneously approximatively $\tau$-compact if and only if $z+C$ is simultaneously approximatively $\tau$-compact for any $z\in X$, and if and only if $\lambda C$ is simultaneously approximatively $\tau$-compact for any $\lambda>0$.
\end{proposition}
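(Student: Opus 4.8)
The plan is to reduce all three statements to two elementary change-of-variables identities, one for translation and one for dilation, and then to exploit the fact that simultaneous (strong, or approximative) proximinality is a property quantified over \emph{all} bounded sets $A$: any bijection of the family of bounded subsets of $X$ onto itself will automatically preserve each such property.

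First, for the translation case, I would fix $z\in X$ and note that for any bounded $A\subset X$ and any $c\in C$ one has $\sup_{a\in A}\|a-(z+c)\|=\sup_{a'\in A-z}\|a'-c\|$, where $A-z=\{a-z:a\in A\}$ is again bounded. Taking infima over $C$ yields $d(A,z+C)=d(A-z,C)$, and hence $P_{z+C}(A)=z+P_C(A-z)$ and, more generally, $P_{z+C}(A,\delta)=z+P_C(A-z,\delta)$ for every $\delta>0$. Since $A\mapsto A-z$ is a bijection of the collection of bounded subsets of $X$ onto itself, nonemptiness of $P_{z+C}(A)$ for all bounded $A$ is equivalent to nonemptiness of $P_C(A')$ for all bounded $A'$, which gives (1). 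For (2) I would use that a translation is a $\tau$-homeomorphism fixing the filter of neighbourhoods of $0$, so after cancelling the common translate $z$ the inclusion $P_{z+C}(A,\delta)\subset P_{z+C}(A)+V$ becomes $P_C(A-z,\delta)\subset P_C(A-z)+V$; for (3) a minimizing sequence $\{z+c_n\}$ for $A$ in $z+C$ corresponds exactly to a minimizing sequence $\{c_n\}$ for $A-z$ in $C$, and $\tau$-convergence of subsequences transfers through the translation.

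For the dilation case, I would fix $\lambda>0$ and use $\sup_{a\in A}\|a-\lambda c\|=\lambda\sup_{a''\in\lambda^{-1}A}\|a''-c\|$, giving $d(A,\lambda C)=\lambda\,d(\lambda^{-1}A,C)$, $P_{\lambda C}(A)=\lambda P_C(\lambda^{-1}A)$, and $P_{\lambda C}(A,\delta)=\lambda P_C(\lambda^{-1}A,\delta/\lambda)$. Again $A\mapsto\lambda^{-1}A$ is a self-bijection of the bounded sets, which settles (1). The only bookkeeping requiring care appears in (2): given a $\tau$-neighbourhood $V$ of $0$, the set $\lambda^{-1}V$ is again such a neighbourhood, so strong proximinality of $C$ supplies $\delta'>0$ with $P_C(\lambda^{-1}A,\delta')\subset P_C(\lambda^{-1}A)+\lambda^{-1}V$; multiplying by $\lambda$ and setting $\delta=\lambda\delta'$ gives $P_{\lambda C}(A,\delta)\subset P_{\lambda C}(A)+V$. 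For (3) the correspondence $\{\lambda c_n\}\mapsto\{c_n\}$ sends minimizing sequences for $A$ in $\lambda C$ to minimizing sequences for $\lambda^{-1}A$ in $C$ and preserves $\tau$-convergence.

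I expect no genuine obstacle here: the whole content lies in the two norm identities above, and the remaining work is to verify that each defining clause of the three properties—existence of best simultaneous approximations, the $\delta$–$V$ strong-proximinality estimate, and the subsequential $\tau$-convergence of minimizing sequences—is invariant under the induced self-bijection of bounded sets together with the $\tau$-homeomorphisms given by translation and positive dilation. The most error-prone point is tracking the scaling factors $\delta\mapsto\delta/\lambda$ and $V\mapsto\lambda^{-1}V$ in part (2), which must be matched precisely so that the final inclusion lands in $P_{\lambda C}(A)+V$ and not in a rescaled neighbourhood.
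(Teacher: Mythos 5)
Your proposal is correct and follows essentially the same route as the paper, which simply records the translation and dilation identities $d(A,C)=d(z+A,z+C)$, $d(\lambda A,\lambda C)=\lambda d(A,C)$, $P_{z+C}(z+A)=z+P_C(A)$, $P_{\lambda C}(\lambda A)=\lambda P_C(A)$ and declares the rest elementary. Your version merely writes these with $A$ replaced by $A-z$ and $\lambda^{-1}A$ and spells out the $\delta\mapsto\delta/\lambda$, $V\mapsto\lambda^{-1}V$ bookkeeping that the paper leaves implicit.
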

\begin{proof}
The proof is elementary. It is sufficient to note that for every bounded set $A\subset X$,
\[d(A,C)=d(z+A,z+C), d(\lambda A,\lambda C)=\lambda d(A,C)\]
and
\[P_{z+C}(z+A)=z+P_C(A), P_{\lambda C}(\lambda A)=\lambda P_C(A).\]
\end{proof}

\begin{proposition}
Suppose that $C$ is a $\tau$-compact set of a Banach space $X$. Then $C$ is simultaneously approximatively $\tau$-compact.
\end{proposition}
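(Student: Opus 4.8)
The plan is to reduce the claim to the sequential $\tau$-compactness of $C$. Indeed, the definition of simultaneous approximative $\tau$-compactness only requires that, for each bounded $A\subset X$, every minimizing sequence $\{y_n\}\subset C$ for $A$ have a subsequence $\tau$-converging to a point of $C$; but once $C$ is known to be sequentially $\tau$-compact, this holds for \emph{every} sequence in $C$, minimizing or not. So I would fix an arbitrary bounded set $A\subset X$ and an arbitrary sequence $\{y_n\}\subset C$, and simply extract a $\tau$-convergent subsequence, never even using the minimizing hypothesis at this stage.

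When $\tau$ is the norm topology this is immediate: $C$ is a compact subset of the metric space $(X,\|\cdot\|)$, hence sequentially compact, so $\{y_n\}$ has a norm-convergent subsequence with limit in $C$. When $\tau$ is the weak topology the only real work lies in the fact that topological weak compactness does not, a priori, produce weakly convergent subsequences. Here I would invoke the Eberlein--\v{S}mulian theorem, which guarantees that a weakly compact subset of a Banach space is weakly sequentially compact, and thereby extract a subsequence $\{y_{n_k}\}$ converging weakly to some $y\in C$. This passage from topological to sequential compactness is the main (and essentially only) obstacle, and Eberlein--\v{S}mulian resolves it cleanly.

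For completeness, although the definition does not require it, I would also record that the limit $y$ is a best simultaneous approximation of $A$, which exhibits $C$ as simultaneously proximinal as well. The functional $y\mapsto\sup_{a\in A}\|a-y\|$ is convex and norm-continuous, hence weakly lower semicontinuous, so that $\sup_{a\in A}\|a-y\|\le\liminf_k\sup_{a\in A}\|a-y_{n_k}\|=d(A,C)$; since $y\in C$ the reverse inequality is trivial, giving $y\in P_C(A)$. This last observation is where the minimizing property finally enters, but it plays no role in the approximative-compactness conclusion itself.
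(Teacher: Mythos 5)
Your proof is correct and follows essentially the same route as the paper: the paper's own argument is simply that $\tau$-compactness of $C$ yields a $\tau$-convergent subsequence of any minimizing sequence, and your explicit appeal to Eberlein--\v{S}mulian in the weak case just makes precise the sequential-compactness step the paper leaves implicit. The closing remark about the limit lying in $P_C(A)$ is a harmless addition not needed for the statement.
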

\begin{proof}
Let $A\subset X$ be a bounded subset of $X$ and $\{y_n\}\subset C$ a minimizing sequence for $A$, i.e.
\[\lim\limits_{n\rightarrow\infty}\sup\limits_{a\in A}\|a-y_n\|=d(A,C).\]
By the $\tau$-compactness of $C$, $\{y_n\}$ has a $\tau$-convergent subsequence. The proof is complete.
\end{proof}

\begin{corollary}
Let $E$ be a subspace of a Banach space $X$. Then\\
(1) if $E$ is reflexive, then $E$ is simultaneously approximatively weakly compact.\\
(2) if $E$ is finite dimensional, then $E$ is simultaneously approximatively compact.
\end{corollary}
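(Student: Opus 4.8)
The plan is to reduce both assertions to the case of a $\tau$-compact set, already treated in the preceding proposition, by confining an arbitrary minimizing sequence to a bounded --- hence relatively $\tau$-compact --- portion of $E$. The only point requiring an argument is that minimizing sequences stay bounded; reflexivity and finite-dimensionality then supply the compactness needed for the weak and norm topologies respectively.

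First I would observe that every minimizing sequence is bounded. Let $A\subset X$ be nonempty and bounded, and let $\{y_n\}\subset E$ be minimizing for $A$. Since $0\in E$, we have $d(A,E)\le \sup_{a\in A}\|a\|<\infty$, so $\sup_{a\in A}\|a-y_n\|$ tends to a finite limit. Fixing any $a_0\in A$, the bound $\|y_n\|\le \sup_{a\in A}\|a-y_n\|+\|a_0\|$ shows that $\{y_n\}$ lies in $E\cap rB_X$ for some $r>0$. Writing $C:=E\cap rB_X$, the inclusion $C\subset E$ gives $d(A,C)\ge d(A,E)$, while $\sup_{a\in A}\|a-y_n\|\ge d(A,C)$ for each $n$ forces $d(A,C)\le d(A,E)$ in the limit; hence $d(A,C)=d(A,E)$ and $\{y_n\}$ is also minimizing for $C$.

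For part (1), take $\tau$ to be the weak topology. Since $E$ is reflexive, Kakutani's theorem shows that the closed ball $C=\{y\in E:\|y\|\le r\}$ is weakly compact, and its weak topology coincides with the one inherited from $X$ because every continuous functional on $E$ extends to $X$. Thus $C$ is a weakly compact subset of $X$, so by the preceding proposition it is simultaneously approximatively weakly compact. Applying this to the minimizing sequence $\{y_n\}$ for $C$ yields a subsequence weakly convergent to a point of $C\subset E$, which is exactly the conclusion that $E$ is simultaneously approximatively weakly compact.

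For part (2), take $\tau$ to be the norm topology. When $E$ is finite-dimensional, $C=E\cap rB_X$ is a closed bounded subset of a finite-dimensional space, hence norm-compact by the Heine--Borel theorem; the preceding proposition then produces a norm-convergent subsequence of $\{y_n\}$ with limit in $C\subset E$. Equivalently, part (2) follows from part (1), since a finite-dimensional space is reflexive and its weak and norm topologies agree. The only genuine subtlety is that $E$ itself is never $\tau$-compact, so the preceding proposition cannot be applied to $E$ directly; the boundedness of minimizing sequences --- which uses the boundedness of $A$ --- is precisely what allows the passage to the $\tau$-compact set $E\cap rB_X$.
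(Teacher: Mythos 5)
Your proposal is correct and follows essentially the same route as the paper: both arguments confine the (necessarily bounded) minimizing sequence to a ball of $E$, which is weakly compact (reflexive case) or norm compact (finite-dimensional case), and then invoke the preceding proposition on $\tau$-compact sets. Your write-up simply makes explicit two steps the paper leaves implicit, namely why the minimizing sequence is bounded and why $d(A,E)=d(A,E\cap rB_X)$.
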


\begin{proof}
(1) Suppose that $A\subset X$ be a bounded subset and $\{y_n\}\subset E$ a minimizing sequence for $A$, then $\{y_n\}$ is bounded. Let $\lambda=\sup\limits_{n}\|y_n\|$, then
\[d(A,E)=d(A,\lambda B_E)\;\; \text{and}\;\;\; \{y_n\}\subset \lambda B_E.\]
Since $B_E$ is weakly compact, it follows from  Proposition 2.1 and Proposition 2.2 that $\lambda B_E$ is simultaneously approximatively weakly compact. This implies that  $\{y_n\}$ has a weakly convergent subsequence, and so $E$ is  simultaneously approximatively weakly compact.

(2) The proof is similar to (1), it is sufficient to substitute compactness for weak compactness.
\end{proof}
Gupta and Narang\cite{G}  showed that a closed subset $C$ of $X$ is simultaneously approximatively compact if and only if $C$ is simultaneously strongly proximinal and $P_C(A)$ is compact for every bounded subset $A$ of $X$. When $C$ is weakly closed, we have the following result.

\begin{theorem}
Let $C$ be a weak closed subset of a Banach space $X$ and $A\subset X$ be a bounded subset. If $C$ is simultaneously approximatively weakly compact for $A$, then $C$ is simultaneously weakly strongly proximinal  for  $A$ and $P_C(A)$ is weakly compact.
\end{theorem}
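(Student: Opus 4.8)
The plan is to deduce all three conclusions from one preliminary observation: the functional $f(y):=\sup_{a\in A}\|a-y\|$ is weakly lower semicontinuous on $X$. Indeed, for each fixed $a$ the map $y\mapsto\|a-y\|$ is convex and norm-continuous, hence weakly lower semicontinuous, and a supremum of weakly lower semicontinuous functions is again such. The key consequence is that if $\{y_n\}\subset C$ is a minimizing sequence for $A$ converging weakly to some $y_0\in C$, then
\[ d(A,C)\le f(y_0)\le\liminf_n f(y_n)=d(A,C), \]
so $f(y_0)=d(A,C)$, i.e. $y_0\in P_C(A)$. Proximinality for $A$ follows immediately: any minimizing sequence (which exists since $d(A,C)$ is an infimum) has, by the approximative weak compactness hypothesis, a weakly convergent subsequence with limit $y_0\in C$, and the observation gives $y_0\in P_C(A)$, so $P_C(A)\neq\emptyset$.

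For the neighbourhood inclusion I would argue by contradiction. Suppose there is a weak neighbourhood $V$ of $0$ such that for every $n$ one can choose $y_n\in P_C(A,1/n)$ with $y_n\notin P_C(A)+V$. Then $\sup_{a\in A}\|a-y_n\|<d(A,C)+1/n$, so $\{y_n\}$ is minimizing; by hypothesis and the observation above a subsequence $\{y_{n_k}\}$ converges weakly to some $y_0\in P_C(A)$. Since $y_{n_k}-y_0\to 0$ weakly and $V$ is a weak neighbourhood of $0$, we have $y_{n_k}-y_0\in V$ for all large $k$, hence $y_{n_k}\in y_0+V\subset P_C(A)+V$, contradicting the choice of $y_{n_k}$. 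Thus for each weak neighbourhood $V$ there is $\delta>0$ with $P_C(A,\delta)\subset P_C(A)+V$, which is exactly simultaneous weak strong proximinality for $A$.

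Finally, to show $P_C(A)$ is weakly compact: fixing any $a_0\in A$, every $y\in P_C(A)$ satisfies $\|y\|\le\|a_0\|+d(A,C)$, so $P_C(A)$ is bounded, and by the weak lower semicontinuity of $f$ the set $P_C(A)=C\cap\{y:f(y)\le d(A,C)\}$ is weakly closed. Any sequence in $P_C(A)$ is minimizing, so the hypothesis yields a weakly convergent subsequence; hence $P_C(A)$ is relatively weakly compact by the Eberlein--\v{S}mulian theorem, and being weakly closed it is weakly compact. The only step beyond the lower-semicontinuity observation is this appeal to Eberlein--\v{S}mulian, which is where I expect the essential work to lie, since it is what converts the merely sequential information supplied by the hypothesis into genuine weak compactness.
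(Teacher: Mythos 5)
Your proof is correct and follows essentially the same route as the paper's: the paper likewise deduces that weak limits of minimizing sequences lie in $P_C(A)$ from the weak lower semicontinuity of the norm, runs the same contradiction argument with a weak neighbourhood $V$ for the strong proximinality part, and invokes the coincidence of weak compactness and weak sequential compactness (Eberlein--\v{S}mulian) for the compactness of $P_C(A)$. Your write-up is merely more explicit about the weak closedness and boundedness of $P_C(A)$, which the paper leaves implicit.
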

\begin{proof}
Note that in weak topology, weak compactness and weakly sequential compactness coincide. It follows that if $C$ is simultaneously approximatively weakly compact , then $P_C(A)$ is weakly compact, for every bounded subset $A\subset X$.

If $C$ is not simultaneously weakly strongly proximinal, then there exists a bounded subset $A\subset X$,  a weak neighbourhood of $0$ and a minimizing sequence $\{y_n\}\subset C$ for $A$ with $y_n\notin P_C(A)+V$. Since $C$ is simultaneously approximatively weakly compact, $\{y_n\}$ has a weakly convergent subsequence $\{y_{n_k}\}$ with $y_{n_k} \stackrel{w}{\rightarrow} y_0$.
By the weakly lower semi-continuity of the norm, we have $y_0\in P_C(A)$. Therefore, there exist some $n\geq1$ such that
$y_n\in y_0+V\subset P_C(A)+V$. A contradiction!

\end{proof}

Note that if $C$ is a closed convex set, then $C$ is simultaneously approximatively compact$\Rightarrow$ $C$ is simultaneously approximatively weakly compact and $C$ is simultaneously strongly proximinal $\Rightarrow$ $C$ is simultaneously weakly strongly proximinal. The following example due to Dutta\cite{DS} will show that none of the implications can be reversed.

\begin{example}\cite[Example 2.3]{DS}
Consider the sequence $\{x_n\}$ in $c_0$  where $x_n=(-\frac{1}{n},0,\cdots,1,0,\cdots)$, where $1$ occurs at the nth place. Note that $x_n\rightarrow 0$ weakly. Let $C=\overline{co}\{x_n:n\in\mathbb{N} \}$. Then $C$ is weakly compact
hence simultaneously approximatively weakly compact. Since $C$ is convex, by Theorem 2.4, $C$ is simultaneously weakly strongly proximinal. It was shown that $C$ is not  strongly proximinal in \cite{DS}. Further, $C$ is not  simultaneously strongly proximinal.
\end{example}

 The following result characterizes reflexive spaces from simultaneous proximinality point of view.
\begin{theorem}
Let $X$ be a Banach space. Then the following statements are equivalent.\\
(1) $X$ is reflexive.\\
(2) Every closed convex set is simultaneously approximatively weakly compact.\\
(3) Every closed convex set is simultaneously weakly strongly proximinal.\\
(4) Every closed convex set is simultaneously proximinal.
\end{theorem}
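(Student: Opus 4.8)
The plan is to close the cycle of implications $(1)\Rightarrow(2)\Rightarrow(3)\Rightarrow(4)\Rightarrow(1)$. The first three arrows follow almost formally from the results already assembled in this section, so the substance of the theorem lies entirely in the converse passage $(4)\Rightarrow(1)$, which I would deduce from James' theorem.

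For $(1)\Rightarrow(2)$ I would fix a closed convex set $C$, a bounded set $A\subset X$, and a minimizing sequence $\{y_n\}\subset C$ for $A$. Exactly as in the proof of Corollary 2.3, a minimizing sequence is bounded, so $\{y_n\}\subset\lambda B_X$ for some $\lambda>0$. Reflexivity makes $\lambda B_X$ weakly compact, so $\{y_n\}$ has a subsequence $y_{n_k}\stackrel{w}{\rightarrow}y_0$; since $C$ is closed and convex it is weakly closed by Mazur's theorem, whence $y_0\in C$. This is precisely the definition of simultaneous approximative weak compactness. The arrow $(2)\Rightarrow(3)$ is then immediate from Theorem 2.4, because a closed convex set is weakly closed and therefore inherits simultaneous weak strong proximinality once it is simultaneously approximatively weakly compact for every bounded $A$. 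Finally $(3)\Rightarrow(4)$ is a direct reading of Definition 2.2, a simultaneously weakly strongly proximinal set being in particular simultaneously proximinal.

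The hard part is $(4)\Rightarrow(1)$, which I would argue by contraposition. Suppose $X$ is not reflexive; then James' theorem yields a functional $f\in S_{X^{*}}$ that does not attain its norm on $B_X$. Consider the hyperplane $H=\ker f$, a closed convex subspace, and pick $x_0$ with $f(x_0)=1$, so that $d(x_0,H)=1/\|f\|$. A best approximation $h\in H$ to $x_0$ would force $f$ to attain its norm at the unit vector $(x_0-h)/\|x_0-h\|$, since there $f$ takes the value $\|f\|$; as no such norming vector exists, $H$ has no closest point to $x_0$ and is thus not proximinal. Taking $A=\{x_0\}$ shows $H$ is not simultaneously proximinal either, contradicting $(4)$. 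The only genuinely delicate ingredient here is the identification of reflexivity with norm attainment through James' theorem; the remaining steps are bookkeeping with the definitions and the weak-compactness results established above.
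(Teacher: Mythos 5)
Your proposal is correct, and the first three implications follow the paper's proof essentially verbatim: $(1)\Rightarrow(2)$ by boundedness of minimizing sequences plus weak compactness of balls in a reflexive space (your explicit appeal to Mazur's theorem to place the weak limit back in $C$ is a small point the paper leaves tacit, and is a welcome completion of the definition check), $(2)\Rightarrow(3)$ by Theorem 2.4, and $(3)\Rightarrow(4)$ by definition. The one place you genuinely diverge is $(4)\Rightarrow(1)$: the paper disposes of it in one line by observing that simultaneous proximinality implies proximinality and then citing \cite[Theorem 2.8]{B}, which asserts that a Banach space in which every closed convex set is proximinal must be reflexive. You instead unpack that citation into a self-contained contrapositive argument: James' theorem produces a non-norm-attaining $f\in S_{X^*}$, and the hyperplane $\ker f$ fails to admit a nearest point to any $x_0$ with $f(x_0)=1$, since a best approximant $h$ would give $f(x_0-h)=1=\|f\|\,\|x_0-h\|$ and hence a norming vector. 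The distance formula $d(x_0,\ker f)=|f(x_0)|/\|f\|$ you rely on is standard and your deduction is sound. What your route buys is independence from the external reference at the cost of invoking James' theorem explicitly (which is, of course, exactly the engine behind the cited result as well); what the paper's route buys is brevity. Either is acceptable.
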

\begin{proof}
(1)$\Rightarrow$(2): Let $C$ be a closed convex set of $X$ and $A\subset X$ be a bounded subset. suppose $\{y_n\}\subset C$ is a minimizing sequence for $A$, then $\{y_n\}$ is bounded. Since $X$ is reflexive, $\{y_n\}$ is relatively weakly compact. Thus,  $\{y_n\}$ has a weakly convergent subsequence.

(2)$\Rightarrow$(3): Follows from Theorem 2.4.

(3)$\Rightarrow$(4): Obviously.

(4)$\Rightarrow$(1): Note that a simultaneously proximinal set is proximinal, this follows from \cite[Theorem 2.8]{B}.
\end{proof}

\section{Sum of simultaneously proximinal sets}
In this section, we discuss the simultaneous proximinality under sum operation. Firstly, we will show that there exist two simultaneously approximatively compact sets satisfying the sum is closed but not simultaneously proximinal in any infinite-dimensional  Banach space $X$. The following lemma is classical.
\begin{lemma}
Let $X$ be a Banach space and $Y$  a proper closed subspace of $X$. Then for every $0<\varepsilon<1$, there exists
$x\in S_X$ such that $d(x,Y)>\varepsilon$.
\end{lemma}

By the Lemma 3.1, we have

\begin{lemma}
Let $X$ be a Banach space and $Y$  a proper closed subspace of $X$. Then for every $0<\varepsilon<1$, there exists
$x\in X$ with $\|x\|=\frac{1}{\varepsilon}$ such that $d(x,Y)>1$.
\end{lemma}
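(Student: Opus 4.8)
The plan is simply to rescale the vector produced by Lemma 3.1. First I would apply Lemma 3.1 with the given $\varepsilon \in (0,1)$ to obtain a vector $x_0 \in S_X$, so that $\|x_0\| = 1$ and $d(x_0, Y) > \varepsilon$. The candidate for the desired vector is then $x = \varepsilon^{-1} x_0$, which by homogeneity of the norm satisfies $\|x\| = \varepsilon^{-1}\|x_0\| = 1/\varepsilon$, exactly as required.

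The only remaining point is to verify the distance estimate, and here the key (and quite mild) observation is that $Y$, being a subspace, is invariant under nonzero scalar multiplication: as $y$ ranges over $Y$, so does $\varepsilon y$. Concretely,
\[ d(x, Y) = \inf_{y \in Y} \|\varepsilon^{-1} x_0 - y\| = \varepsilon^{-1} \inf_{y \in Y} \|x_0 - \varepsilon y\| = \varepsilon^{-1} d(x_0, Y). \]
Combining this with $d(x_0, Y) > \varepsilon$ yields $d(x, Y) > \varepsilon^{-1} \cdot \varepsilon = 1$, which completes the argument.

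There is essentially no obstacle here; the statement is a routine homogenization of the Riesz-type Lemma 3.1, and the entire content is the scaling identity $d(\lambda x_0, Y) = \lambda\, d(x_0, Y)$, valid for any subspace $Y$ and scalar $\lambda > 0$. I would expect the author's proof to consist of precisely this two-line computation.
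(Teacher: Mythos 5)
Your proposal is correct and is exactly the paper's argument: the paper's proof consists of the single observation that $d(\tfrac{1}{\varepsilon}x,Y)=\tfrac{1}{\varepsilon}d(x,Y)$, applied to the vector furnished by Lemma 3.1. Your write-up just spells out the same scaling computation in slightly more detail.
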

\begin{proof}
It is sufficient to note that $d(\frac{1}{\varepsilon}x,Y)=\frac{1}{\varepsilon}d(x,Y).$
\end{proof}

The following result is motivated by Pyatyshev's construction\cite{P}.

\begin{theorem}
Let $X$ be a infinite-dimensional Banach space. Then there exist two simultaneously approximatively compact sets satisfying the sum is closed but not simultaneously proximinal.
\end{theorem}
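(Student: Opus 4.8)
\emph{Plan.} Since a simultaneously proximinal set is proximinal (take $A$ a singleton), the plan is to build two simultaneously approximatively compact sets $C_1,C_2$ with $C_1+C_2$ closed but \emph{not} proximinal, i.e.\ admitting a point $p$ with $P_{C_1+C_2}(p)=\emptyset$. The engine for producing non-compact yet simultaneously approximatively compact sets is an observation implicit in the proof of Corollary 2.3: every minimizing sequence for a bounded set is bounded, so if $C\cap RB_X$ is compact (we will arrange it to be \emph{finite}) for every $R>0$, then each minimizing sequence lies eventually in one compact slice and hence has a subsequence converging to a point of $C$. Two constraints guide the design. First, in a reflexive $X$ every closed convex set is simultaneously proximinal (Theorem 2.6), so $C_1,C_2$ and their sum must be genuinely non-convex. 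Second, a compact summand would keep the sum proximinal, because $c\mapsto d(p-c,C_2)$ attains its minimum on a compact set; hence \emph{both} summands must be unbounded, and the failure of proximinality can only arise from a minimizing sequence that escapes to infinity via near-cancellation of the two summands.

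First I would apply the Riesz Lemma 3.1 repeatedly to extract a normalized sequence $\{z_k\}_{k\ge0}\subset S_X$ with $d\bigl(z_{k+1},\mathrm{span}\{z_0,\dots,z_k\}\bigr)\ge\tfrac12$; a standard estimate then yields a constant $K>0$ with $|\lambda_j|\le K\bigl\|\sum_k\lambda_k z_k\bigr\|$ for every finite scalar combination and every index $j$. I relabel the sequence into three interleaved families $w_0:=z_0$, $f_n:=z_{2n-1}$, $g_n:=z_{2n}$, fix $c>0$ and escape speeds $\rho_n\uparrow\infty$ with $\rho_1\ge cK$, and set
\[ s_n=w_0+c\Bigl(1+\tfrac1n\Bigr)f_n,\qquad a_n=\tfrac12 s_n+\rho_n g_n,\qquad b_n=\tfrac12 s_n-\rho_n g_n, \]
so that $a_n+b_n=s_n$. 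Then $C_1:=\{a_n:n\ge1\}$ and $C_2:=\{b_n:n\ge1\}$.

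Three points must then be verified. (i) \emph{Simultaneous approximative compactness}: since $\rho_n\to\infty$ gives $\|a_n\|,\|b_n\|\to\infty$, each ball meets $C_1$ and $C_2$ in a finite set, so both are simultaneously approximatively compact by the observation above. (ii) \emph{Non-proximinality at $w_0$}: the diagonal elements give $\|w_0-s_n\|=c(1+\tfrac1n)\downarrow c$, attained by none, while any other element $a_i+b_j$ carries the $g$-coordinate $\rho_i g_i-\rho_j g_j$ with $i\ne j$, so the coefficient bound forces $\|a_i+b_j-w_0\|\ge\tfrac1K\max(\rho_i,\rho_j)\ge c$; hence $d(w_0,C_1+C_2)=c$ and the infimum is not attained. (iii) \emph{Closedness}: if $a_{i_k}+b_{j_k}\to z$, the coefficient estimate keeps the $g$-coordinates, hence the indices $i_k,j_k$, bounded, so along a subsequence they are constant and $z\in C_1+C_2$.

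The hard part will be reconciling the two demands on $C_1+C_2$: closedness pushes toward proximinality, whereas non-proximinality needs the near-optimal elements to run off to infinity. The construction resolves this by letting $s_n$ escape in norm \emph{only} through the cancelling $\pm\rho_n g_n$ part, so the real-valued distances $c(1+\tfrac1n)$ still decrease to the unattained value $c$ while $\{a_n+b_n\}$ has no norm-convergent subsequence and thus produces no new limit point. The remaining work — trading Hilbert-space orthogonality for the uniform coefficient control furnished by the Riesz lemma, and checking that the finitely many low-index cross terms also stay at distance $\ge c$ once $c$ and the speeds $\rho_n$ are fixed — is routine, and it is precisely this that makes the example valid in an arbitrary infinite-dimensional Banach space.
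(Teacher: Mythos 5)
Your construction is genuinely different from the paper's, though both are Pyatyshev-style. The paper takes one summand to be the line $A=\mathrm{span}\{x_0\}$ (simultaneously approximatively compact because finite-dimensional, by Corollary 2.3(2)) and the other to be the norm-divergent sequence $B=\{\lambda_nx_0+x_n\}$, with $x_n$ chosen via Lemma 3.2 so that $\|x_n\|=1/\eps_n\to1$ while $d(x_n,\mathrm{span}\{x_0,\dots,x_{n-1}\})>1$; then $A+B=\bigcup_n(\mathrm{span}\{x_0\}+x_n)$ is a union of uniformly $1$-separated lines, hence closed, and $d(0,A+B)=1$ is not attained. You replace the line by a second divergent sequence and realize the same cancellation mechanism ($a_n,b_n\to\infty$ while $a_n+b_n=s_n$ stays bounded) symmetrically. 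Your design remark that both summands ``must be genuinely non-convex'' is not forced: the paper keeps one summand convex. The advantage of the paper's choice is that every estimate reduces to the single separation $d(x_n,\mathrm{span}\{x_0,\dots,x_{n-1}\})>1$, with no coefficient functionals needed.

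Two steps in your argument need repair. First, iterating Riesz's lemma does \emph{not} produce a uniform $K$ with $|\lambda_j|\le K\bigl\|\sum_k\lambda_kz_k\bigr\|$ for \emph{every} index $j$: the standard telescoping estimate gives only $|\lambda_{N-j}|\le 2\cdot3^{j}\bigl\|\sum_k\lambda_kz_k\bigr\|$, and this growth is unavoidable in general (one can have separation $\tfrac12$ from all preceding spans while the coordinate functionals are unbounded, even in Hilbert space); a uniform bound would require extracting a basic sequence via Mazur's theorem, not the ``routine'' Riesz estimate you invoke. Your proof survives only because every coefficient you actually estimate --- $\rho_{\max(i,j)}$ on $g_{\max(i,j)}=z_{2\max(i,j)}$ in (ii) and (iii), and the $f_n$-coefficient of $s_n-s_m$ for $n>m$ --- sits on the \emph{highest-index} vector present, for which Riesz directly gives $|\lambda_{\mathrm{top}}|\le2\|\cdot\|$; you should argue exactly this instead of asserting a uniform $K$. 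Second, your closedness step (``the coefficient estimate keeps the $g$-coordinates, hence the indices, bounded'') fails on the diagonal: for $i_k=j_k=n_k\to\infty$ the $g$-parts cancel and $a_{n_k}+b_{n_k}=s_{n_k}$ is bounded with unbounded indices. Closedness still holds because $\|s_n-s_m\|\ge c/2$ for $n\ne m$ (again a top-coefficient estimate), so the diagonal family has no accumulation points, but this case must be handled separately. Finally, take $\rho_1>2c$ strictly so that off-diagonal elements lie at distance strictly greater than $c$ from $w_0$; with these fixes your example works.
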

\begin{proof}
Let $\{\varepsilon_n\}$ be a number sequence satisfying: $\frac{1}{2}<\varepsilon_n<1$ and $\varepsilon_n\rightarrow 1$.
Let $x_0\in S_X$, by Lemma 3.2, we can choose a sequence $\{x_n\}\subset X$ such that for any $n\in\mathbb{N}$,
\[\|x_n\|=\frac{1}{\varepsilon_n},\;\;\;\;d(x_n,span\{x_0,x_1,\cdots,x_{n-1}\})>1.\]
Therefore, $1<\|x_n\|<2$ and $\|x_n\|\rightarrow 1$.

For any $n\in\mathbb{N}$, we can choose $\lambda_n>0$, such that $\|\lambda_nx_0+x_n\|=n$. Introduce the sets
\[A=span\{x_0\},\;\;\;\;B=\bigcup^\infty_{n=1}\{\lambda_nx_0+x_n\}.\]
Then
\[A+B=span\{x_0\}+\bigcup^\infty_{n=1}\{\lambda_nx_0+x_n\}=\bigcup^\infty_{n=1}\{span\{x_0\}+x_n\}.\]

Since $A$ is a one dimensional space, by Corollary 2.3(2), $A$ is simultaneously approximatively compact. Note that $B$ consists of norm-divergent sequences, Then $B$ is also simultaneously approximatively compact. For any $n>m$,
we have
\[d(span\{x_0\}+x_n, span\{x_0\}+x_m)=d(x_n,x_m+span\{x_0\})>1,\]
this implies that $A+B$ is closed.
Let us now show that the sum  $A+B$ is not simultaneously proximinal. Taking $D=\{0\}$ to be a singleton set, for every $n\in \mathbb{N}$, we have
\[d(D,span\{x_0\}+x_n)=d(x_n, span\{x_0\})>1.\]
Therefore, $\|x\|>1$ for every $x\in A+B$, and $d(D,A+B)\geq1$. Note that
\[\{x_n\}\subset A+B\;\; \text{and}\;\;\;\|x_n\|\rightarrow 1,\]
then $d(D,A+B)=1$ and $P_{A+B}(D)=\emptyset$. So  $A+B$ is not simultaneously proximinal.
\end{proof}

In the following, we will discuss the preserving properties of $\tau$-compact sets and simultaneously proximinal sets under sum operation. We show that the sum of a compact convex set $C$ and a simultaneously $\tau$-strongly proximinal set (resp. simultaneously approximatively $\tau$-compact set) $D$ of a Banach space is also simultaneously $\tau$-strongly proximinal (resp. simultaneously approximatively $\tau$-compact) This implies that  $C+D$ is closed.

\begin{theorem} Let $C$ and $D$ be two convex subsets of a Banach space $X$. Assume that $C$ is compact and $D$ is simultaneously $\tau$-strongly proximinal.  Then $C+D$ is simultaneously $\tau$-strongly proximinal.
\end{theorem}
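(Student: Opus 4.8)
The plan is to reduce everything to the simultaneous strong proximinality of $D$ by ``freezing'' the compact variable. For a bounded set $A\subset X$ write $f_A(y)=\sup_{a\in A}\|a-y\|$; this is a $1$-Lipschitz convex function, and for any $c\in X$ one has $f_A(c+y)=f_{A-c}(y)$. The first observation is the factorization
\[
d(A,C+D)=\inf_{c\in C}\inf_{e\in D}f_A(c+e)=\inf_{c\in C}d(A-c,D).
\]
Since $\|c-c'\|$ controls $|f_{A-c}(y)-f_{A-c'}(y)|$ uniformly in $y$, the map $c\mapsto d(A-c,D)$ is $1$-Lipschitz, hence continuous on the compact set $C$, so the outer infimum is attained at some $c_0\in C$. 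Choosing $e_0\in P_D(A-c_0)$ (nonempty, since $D$ is in particular simultaneously proximinal) gives $f_A(c_0+e_0)=d(A-c_0,D)=d(A,C+D)$, so $c_0+e_0\in P_{C+D}(A)$; this proves simultaneous proximinality. The $\tau$-closedness of $C+D$ required by the definition is then automatic: $C$ is $\tau$-compact (norm compactness forces weak compactness) and $D$ is $\tau$-closed, and the sum of a $\tau$-compact set and a $\tau$-closed set is $\tau$-closed by the usual subnet argument, valid in both vector topologies.

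For the strong proximinality I would argue by contradiction. Fix a bounded set $A$ and a $\tau$-neighbourhood $V$ of $0$, and suppose that for every $n$ there is $z_n=c_n+e_n\in P_{C+D}(A,1/n)$ with $z_n\notin P_{C+D}(A)+V$, where $c_n\in C$, $e_n\in D$. By compactness of $C$, pass to a subsequence with $c_{n_k}\to c_0\in C$. Writing $A_0=A-c_0$, the key estimate is
\[
f_{A_0}(e_{n_k})\le f_{A-c_{n_k}}(e_{n_k})+\|c_{n_k}-c_0\|=f_A(z_{n_k})+\|c_{n_k}-c_0\|<d(A,C+D)+\tfrac{1}{n_k}+\|c_{n_k}-c_0\|.
\]
Exactly as in the proximinality step one checks $d(A,C+D)=d(A_0,D)$ (the chain $f_A(z_{n_k})\ge d(A-c_{n_k},D)\to d(A_0,D)$ gives one inequality, and $c_0\in C$ gives the reverse), so setting $\delta_k=\tfrac{1}{n_k}+\|c_{n_k}-c_0\|\to 0$ we obtain $e_{n_k}\in P_D(A_0,\delta_k)$.

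Now I would invoke the simultaneous $\tau$-strong proximinality of $D$ relative to the fixed set $A_0$: given a $\tau$-neighbourhood $W$ of $0$ (to be chosen), there is $\eta>0$ with $P_D(A_0,\eta)\subset P_D(A_0)+W$, and for large $k$ we have $\delta_k<\eta$, hence $e_{n_k}=d_k+w_k$ with $d_k\in P_D(A_0)$ and $w_k\in W$. Since $f_A(c_0+d_k)=f_{A_0}(d_k)=d(A_0,D)=d(A,C+D)$, the point $c_0+d_k$ lies in $P_{C+D}(A)$, and therefore
\[
z_{n_k}=(c_0+d_k)+\big[(c_{n_k}-c_0)+w_k\big].
\]
The main obstacle, and the only place where the topology $\tau$ enters delicately, is to ensure the bracketed term lies in $V$ for large $k$. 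For the norm topology this is immediate by taking $W=\tfrac{1}{2}V$ together with $\|c_{n_k}-c_0\|\to 0$. For the weak topology I would write a basic neighbourhood $V=\{x:|f_i(x)|<\alpha,\ 1\le i\le m\}$ with $f_i\in X^*$, choose $W=\{x:|f_i(x)|<\alpha/2,\ 1\le i\le m\}$, and use $f_i(c_{n_k}-c_0)\to 0$ for each $i$ to get $|f_i((c_{n_k}-c_0)+w_k)|<\alpha$ for large $k$. Either way $z_{n_k}\in P_{C+D}(A)+V$ for large $k$, contradicting the choice of $z_{n_k}$, which completes the proof.
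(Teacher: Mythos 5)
Your proposal is correct and follows essentially the same route as the paper's proof: freeze the compact component at a (sub)limit $c_0$, reduce via $d(A,C+D)=d(A-c_0,D)$ to the simultaneous proximinality of $D$, and for the strong part argue by contradiction, showing the $D$-components are near-minimizers for $A-c_0$, applying $D$'s strong proximinality, and reassembling inside $V$ via a neighbourhood $V_1$ with $V_1+V_1\subset V$. The only differences are cosmetic — you attain $\inf_{c\in C}d(A-c,D)$ by continuity on the compact set rather than by extracting a convergent minimizing subsequence, and you make the error term $\delta_k=1/n_k+\|c_{n_k}-c_0\|$ explicit where the paper just says the $d_n$ form a minimizing sequence.
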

\begin{proof}
Firstly, we show that $C+D$ is simultaneously proximinal. Let $A$ be a bounded subset of $X$. Then there exist a sequence
$\{c_n\}\subset C$ and a sequence $\{d_n\}\subset D$ such that
\[\sup\limits_{a\in A}\|a-c_n-d_n\|\rightarrow d(A,C+D).\]
Since $C$ is compact, $\{c_n\}$ has a convergent subsequence $\{c_{n_k}\}$ in the norm topology. We still denote the subsequence $\{c_{n_k}\}$ as $\{c_n\}$, and let $c_n\rightarrow c$. Note that
\[\sup\limits_{a\in A}\|a-c-d_n\|\leq \sup\limits_{a\in A}\|a-c_n-d_n\|+\|c_n-c\|,\]
This implies that
\[d(A,c+D)\leq\sup\limits_{a\in A}\|a-c-d_n\|\rightarrow d(A,C+D)\leq d(A,c+D)=d(A-c,D).\]
Therefore, $d(A,C+D)=d(A-c,D)$. Since $D$ is simultaneously proximinal, there exist $d\in D$ such that
\[\sup\limits_{a\in A}\|a-c-d\|=d(A-c,D)=d(A,C+D),\]
so  $c+d\in P_{C+D}(A)$. Since $A$ is arbitrary, $C+D$ is simultaneously proximinal.

Note that $C+D$ is convex, then $C+D$ is $\tau$-closed by the simultaneous proximinality. If $C+D$ is not simultaneously $\tau$-strongly proximinal, then there exist a bounded set $A\subset X$, a $\tau$-neighbourhood $V$ of $0$ and a minimizing sequence $\{c_n+d_n\}\subset C+D$ for $A$ with $c_n+d_n\notin P_{C+D}(A)+V$ for all $n\geq 1$.
Without lose of generality, let $c_n\rightarrow c$. Thus,
\[\sup\limits_{a\in A}\|a-c-d_n\|\rightarrow d(A,C+D)=d(A,c+D)=d(A-c,D).\]
Suppose that $d\in P_{D}(A-c)$, then $\sup\limits_{a\in A}\|a-c-d\|=d(A-c,D)=d(A,C+D)$. This implies $c+d\in P_{C+D}(A)$ and
\[c+P_{D}(A-c)\subset P_{C+D}(A).\]
By the continuity of addition, there exist a $\tau$-neighbourhood $V_1$ of $0$ with $V_1+V_1\subset V$. Since $\{d_n\}\subset D$ is a minimizing sequence for $A-c$ and $D$ is simultaneously $\tau$-strongly proximinal, there is a $n_0\in \mathbb{N}$ such that for all $n\geq n_0$, $d_n\in P_D(A-c)+V_1$. Note that $c_n\rightarrow c$, there is a $n_1\in \mathbb{N}$ such that for all $n\geq n_1$, $c_n\in c+V_1$. Thus, for $n\geq \max\{n_0,n_1\}$, we have
\[c_n+d_n\in c+V_1+P_D(A-c)+V_1\subset P_{C+D}(A)+V.\]
This is a contradiction!
\end{proof}

\begin{theorem} Let $C$ and $D$ be two convex subsets of a Banach space $X$. Assume that $C$ is compact and $D$ is simultaneously approximatively $\tau$-compact. Then $C+D$ is simultaneously approximatively $\tau$-compact.
\end{theorem}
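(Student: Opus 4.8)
The plan is to mirror the proof of Theorem 3.4, replacing the simultaneous $\tau$-strong proximinality of $D$ by its simultaneous approximative $\tau$-compactness. First I would fix a bounded subset $A\subset X$ and take an arbitrary minimizing sequence $\{z_n\}\subset C+D$ for $A$, written as $z_n=c_n+d_n$ with $c_n\in C$ and $d_n\in D$, so that
\[\sup_{a\in A}\|a-c_n-d_n\|\rightarrow d(A,C+D).\]
Since $C$ is compact, $\{c_n\}$ has a norm-convergent subsequence; passing to it and relabelling, I may assume $c_n\rightarrow c\in C$.

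The key step is to establish the distance identity $d(A,C+D)=d(A-c,D)$, exactly as in Theorem 3.4. From
\[\sup_{a\in A}\|a-c-d_n\|\leq \sup_{a\in A}\|a-c_n-d_n\|+\|c_n-c\|\]
one gets
\[d(A,c+D)\leq \sup_{a\in A}\|a-c-d_n\|\rightarrow d(A,C+D)\leq d(A,c+D)=d(A-c,D),\]
whence $d(A,C+D)=d(A-c,D)$ and, in particular, $\sup_{a\in A}\|a-c-d_n\|\rightarrow d(A-c,D)$. Rewriting $\sup_{a\in A}\|a-c-d_n\|=\sup_{a'\in A-c}\|a'-d_n\|$, this says precisely that $\{d_n\}$ is a minimizing sequence for the bounded set $A-c$ from $D$.

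Now I would invoke the hypothesis. Since $D$ is simultaneously approximatively $\tau$-compact, the minimizing sequence $\{d_n\}$ for $A-c$ has a subsequence $\{d_{n_k}\}$ with $d_{n_k}\stackrel{\tau}{\rightarrow}d$ for some $d\in D$. Along this subsequence $c_{n_k}\rightarrow c$ in norm, hence $c_{n_k}\stackrel{\tau}{\rightarrow}c$ as well, because norm convergence implies convergence in both the norm and the weak topology. By continuity of addition with respect to $\tau$ (for sequences this is immediate: testing against an arbitrary functional in the weak case, and the triangle inequality in the norm case), it follows that
\[c_{n_k}+d_{n_k}\stackrel{\tau}{\rightarrow}c+d\in C+D.\]
Thus every minimizing sequence in $C+D$ for $A$ has a $\tau$-convergent subsequence whose limit lies in $C+D$, and since $A$ was arbitrary, $C+D$ is simultaneously approximatively $\tau$-compact.

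The only genuinely nontrivial point is the distance identity $d(A,C+D)=d(A-c,D)$, which is what allows the compactness of $C$ to transfer the minimizing property from $\{c_n+d_n\}$ to $\{d_n\}$; I expect this to be the main obstacle. The extraction of subsequences and the passage to the limit are then routine, provided the two cases $\tau=\|\cdot\|$ and $\tau=w$ are handled uniformly via the observation that norm convergence is stronger than $\tau$-convergence in either case.
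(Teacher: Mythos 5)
Your proof is correct and follows essentially the same route as the paper's: pass to a norm-convergent subsequence of $\{c_n\}$, establish $d(A,C+D)=d(A-c,D)$ so that $\{d_n\}$ becomes a minimizing sequence for $A-c$ in $D$, and then apply the simultaneous approximative $\tau$-compactness of $D$. The only difference is presentational — the paper phrases the argument as a proof by contradiction, whereas your direct version actually spells out the distance identity and the final passage to the limit more carefully.
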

\begin{proof}
 Suppose that $C+D$ is not simultaneously approximatively $\tau$-compact for some bounded set $A\subset X$. Then there is a minimizing sequence $\{c_n+d_n\}\subset C+D$ for $A$  such that no subsequence is $\tau$-convergent. By the compactness of $C$, $\{d_n\}$ has no $\tau$-convergent subsequence. Without lose of generality, let $c_n\rightarrow c$ in norm topology. Thus,
\[\sup\limits_{a\in A}\|a-c-d_n\|\rightarrow d(A,C+D)=d(A,c+D)=d(A-c,D).\]
Therefore, $\{d_n\}\subset D$ is a minimizing sequence for $A-c$. This contradicts to the simultaneous approximatively $\tau$-compactness for $D$.
\end{proof}

\begin{remark}
Note that the sum of a weakly compact convex subset and a simultaneously approximatively compact subset may be not simultaneously approximatively compact. Let $C$ be the weakly compact subset in Example 2.5 and $D=\{0\}$. Since $D$ is a
singleton, $D$ is simultaneously approximatively compact. But $C+D=C$ is not simultaneously approximatively compact.
\end{remark}

We will show that the sum of a weakly compact convex subset $C$ and a simultaneously approximatively weakly compact convex subset $D$ is also simultaneously approximatively weakly compact. This deduces that  $C+D$ is closed.  We recall first the following useful results.

\begin{lemma}\cite{CCL}
Let $X$ be a Banach space and $C$ be a closed convex set of $X$. Then the following statements are equivalent.
~~\\
(1) $C$ is weakly compact.~~\\
(2) for every sequence $\{x_n\}\subset C$, there is convergent sequence $\{y_n\}$ satisfying $y_n\in co\{x_j\}_{j\geq n}$ \;for all $n\in\mathbb{N}$.~~\\
(3) for every sequence $\{x_n\}\subset C$, there is weakly convergent sequence $\{y_n\}$ satisfying $y_n\in co\{x_j\}_{j\geq n}$ \;for all $n\in\mathbb{N}$.~~\\
\end{lemma}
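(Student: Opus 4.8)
\noindent\emph{Proof strategy.} The plan is to establish the cycle $(1)\Rightarrow(2)\Rightarrow(3)\Rightarrow(1)$. The implication $(2)\Rightarrow(3)$ is immediate, since norm convergence implies weak convergence, so all the substance lies in $(1)\Rightarrow(2)$, which manufactures \emph{norm}-convergent convex blocks out of weak compactness, and in the converse $(3)\Rightarrow(1)$, which recovers weak compactness from the mere existence of weakly convergent convex blocks.

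For $(1)\Rightarrow(2)$, I would fix a sequence $\{x_n\}\subset C$ and invoke the Eberlein--\v{S}mulian theorem: since $C$ is weakly compact, $\{x_n\}$ has a subsequence with $x_{n_k}\stackrel{w}{\rightarrow}x$ and $x\in C$. For each fixed $n$ the tail $\{x_{n_k}:n_k\geq n\}$ still converges weakly to $x$, so Mazur's lemma places $x$ in the norm closure $\overline{\mathrm{co}}\{x_j:j\geq n\}$. Hence I may pick $y_n\in\mathrm{co}\{x_j:j\geq n\}$ with $\|y_n-x\|<1/n$; the resulting sequence satisfies $y_n\rightarrow x$ in norm with $y_n\in\mathrm{co}\{x_j:j\geq n\}$ for every $n$, which is exactly $(2)$.

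For $(3)\Rightarrow(1)$, I would first record that a closed convex set is weakly closed, so by Eberlein--\v{S}mulian it is enough to show that $C$ is bounded and that every $\{x_n\}\subset C$ has a weakly convergent subsequence. Boundedness should be extracted from $(3)$ by a separate argument: were $C$ unbounded, one selects points $x_n\in C$ with $\|x_n\|\rightarrow\infty$ spread out enough (along an almost-orthogonal or basic selection) that every convex block has norm tending to infinity, so no block sequence could converge weakly. For the sequential part I would argue by contradiction via Rosenthal's $\ell^1$-theorem: a sequence with no weakly convergent subsequence would have either a subsequence equivalent to the unit vector basis of $\ell^1$, or a weakly Cauchy subsequence whose weak$^*$-limit $x^{**}$ lies in $X^{**}\setminus X$. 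Applying $(3)$ to such a subsequence produces blocks $y_n\in\mathrm{co}\{x_j:j\geq n\}$ with $y_n\stackrel{w}{\rightarrow}y\in X$, and a Cesàro/averaging computation shows the blocks inherit the limiting behaviour of the subsequence: this forces $y_n\stackrel{w^*}{\rightarrow}x^{**}$, hence $y=x^{**}\notin X$ in the second case, and contradicts the Schur-type rigidity of $\ell^1$-blocks (which cannot converge weakly) in the first. Either way one reaches a contradiction, so $C$ is weakly sequentially compact, hence weakly compact.

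The step I expect to be the main obstacle is $(3)\Rightarrow(1)$: converting a condition phrased only through convex blocks into genuine weak compactness forces one to rule out the two standard obstructions to relative weak compactness — escape of the limit into the bidual and the presence of an $\ell^1$-like subsequence — and to treat unboundedness by a delicate separate selection, since the block condition controls the sequence only after convexification. The forward direction $(1)\Rightarrow(2)$, by contrast, is a routine pairing of Eberlein--\v{S}mulian with Mazur's lemma.
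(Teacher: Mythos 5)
First, a point of comparison: the paper does not prove this lemma at all --- it is quoted from the cited reference [CCL] --- so there is no in-paper argument to measure yours against; the closest internal material is Lemma 3.8 (James' criterion) and the block manipulation in the proof of Theorem 3.10. Your cycle $(1)\Rightarrow(2)\Rightarrow(3)\Rightarrow(1)$ has the right shape, and the first two implications are fine: Eberlein--\v{S}mulian plus Mazur's lemma gives $(1)\Rightarrow(2)$ exactly as you describe, and $(2)\Rightarrow(3)$ is trivial.

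The genuine gap is the boundedness step inside $(3)\Rightarrow(1)$. Selecting $x_n\in C$ with $\|x_n\|\rightarrow\infty$ ``spread out along an almost-orthogonal or basic selection'' does \emph{not} force the convex blocks to escape to infinity. Concretely, in $c_0$ take $x_j=j e_j$: the normalized vectors $e_j$ form a monotone basic sequence and $\|x_j\|=j\rightarrow\infty$, yet the convex combinations
\[
y_n=\Bigl(\sum_{j=n}^{n+N}\tfrac1j\Bigr)^{-1}\sum_{j=n}^{n+N}\tfrac1j\,(je_j)=\Bigl(\sum_{j=n}^{n+N}\tfrac1j\Bigr)^{-1}\sum_{j=n}^{n+N}e_j
\]
have sup-norm $\bigl(\sum_{j=n}^{n+N}1/j\bigr)^{-1}$, which is smaller than $1/n$ once $N$ is large; these blocks converge to $0$ in norm. (The same failure occurs for orthonormal directions in Hilbert space when $\|x_j\|\rightarrow\infty$ slowly.) The selection you need is linear rather than geometric: if $C$ is unbounded then, by Banach--Steinhaus applied to $C\subset X^{**}$ acting on $X^{*}$, some $f\in X^{*}$ is unbounded above on $C$; choosing $x_n\in C$ with $f(x_n)\geq n$ gives $f(y)\geq n$, hence $\|y\|\geq n/\|f\|$, for every $y\in\mathrm{co}\{x_j\}_{j\geq n}$, so no block sequence is even bounded and $(3)$ fails. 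With that repair the remainder of your $(3)\Rightarrow(1)$ does go through, though both Rosenthal cases need the details you only gesture at: in the $\ell^1$ case you must pass to blocks with pairwise disjoint supports to obtain a uniformly separated sequence before invoking the Schur property of the span, and in the weakly Cauchy case the tail-averaging computation identifying the weak limit with $x^{**}\notin X$ is correct as stated.

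A lighter route for the hard direction, and the one closest to both the source and this paper's own toolkit, is to bypass Rosenthal's theorem: once $C$ is known to be bounded, non-weak-compactness yields via Lemma 3.8 a sequence satisfying the James condition $d(\mathrm{co}(y_1,\dots,y_k),\mathrm{co}(y_{k+1},\dots))\geq\theta$, and, exactly as in the proof of Theorem 3.10, any sequence of convex blocks of its tails has a further subsequence with consecutive supports that again satisfies the James condition, hence cannot be weakly convergent.
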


\begin{lemma}\cite{J}
Let $X$ be a Banach space and $C$ be a bounded subset of $X$. Then the following statements are equivalent.
~~\\
(1) $C$ is not relatively weakly compact.
~~\\
(2) there exists a sequence $\{y_n\}\subset C$ satisfying the James condition, i.e., there exists some $\theta>0$, such that
\[d(co(y_1,y_2,\cdots,y_k), co(y_{k+1},y_{k+2},\cdots))\geq \theta, \forall k\in\mathbb{N}.\]
\end{lemma}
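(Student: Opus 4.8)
The plan is to prove the two implications in turn. The direction $(2)\Rightarrow(1)$ is the soft one: I will show that a sequence obeying the James condition can possess no weakly convergent subsequence, so that by the Eberlein--\v{S}mulian theorem the set $\{y_n\}$, and hence the larger set $C$, fails to be relatively weakly compact. The direction $(1)\Rightarrow(2)$ is the substantive construction; I would carry it out in the bidual $X^{**}$ by an interlaced choice of points and functionals, and this is where essentially all the work sits.

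For $(2)\Rightarrow(1)$, suppose $\{y_n\}\subset C$ satisfies the James condition for some $\theta>0$ and, aiming at a contradiction, that $y_{n_j}\stackrel{w}{\rightarrow}y$ along some subsequence. By Mazur's theorem the weak and norm closures of a convex set coincide, so the weak limit $y$ lies in the norm-closed convex hull $\overline{co}\{y_{n_j}:j\geq m\}$ of every tail; consequently $y$ is approximable in norm by finite convex combinations of arbitrarily late terms. I would first pick a convex combination $z_1$ with $\|z_1-y\|<\theta/3$ and let $L$ be the largest index occurring in it, so $z_1\in co(y_1,\dots,y_L)$, and then pick a second convex combination $z_2$ using only terms $y_{n_j}$ with $n_j>L$ and $\|z_2-y\|<\theta/3$, so $z_2\in co(y_{L+1},y_{L+2},\dots)$. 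Then $\|z_1-z_2\|<2\theta/3$, contradicting $d(co(y_1,\dots,y_L),co(y_{L+1},\dots))\geq\theta$. The same block argument can instead be run through Lemma 3.7 applied to $\overline{co}(C)$ in place of Mazur's theorem. Hence no weakly convergent subsequence exists, and since $\{y_n\}\subset C$, the set $C$ is not relatively weakly compact.

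For $(1)\Rightarrow(2)$, the Eberlein--\v{S}mulian theorem first supplies a sequence $\{x_n\}\subset C$ with no weakly convergent subsequence; as a set it is therefore not relatively weakly compact, and by the standard bidual description this means its weak$^{*}$ closure in $X^{**}$ contains a point $\xi\notin X$. Put $d:=d(\xi,X)>0$. I would then build a subsequence $\{y_k\}=\{x_{n_k}\}$ together with functionals $f_k\in B_{X^{*}}$ by interlacing: at stage $k$ the initial block $F_k:=\mathrm{span}(y_1,\dots,y_k)$ is finite dimensional, hence weak$^{*}$ closed, so the duality identity $\sup\{\xi(f):f\in B_{X^{*}},\,f|_{F_k}=0\}=d(\xi,F_k)\geq d$ furnishes $f_k\in B_{X^{*}}$ with $f_k|_{F_k}=0$ and $\xi(f_k)>d-\eps$. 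I then choose the next term $y_{k+1}$ to be some $x_n$ lying in a weak$^{*}$ neighbourhood of $\xi$ small enough that $f_i(y_{k+1})$ is within $\eps$ of $\xi(f_i)$ for all $i\leq k$. With this bookkeeping, $f_k$ vanishes on $co(y_1,\dots,y_k)$ while $f_k\geq d-2\eps$ on $co(y_{k+1},y_{k+2},\dots)$, so the two hulls are separated by a gap at least $d-2\eps$; choosing $\eps$ small gives the James condition uniformly with, say, $\theta=d/2$.

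The main obstacle is precisely this interlaced construction: one must arrange a single selection of points and functionals so that, simultaneously and uniformly in $k$, each $f_k$ kills the finite initial block yet stays near the full distance $d$ at $\xi$, and so that every later term is weak$^{*}$ close to $\xi$ against all earlier functionals at once. The finite dimensionality of $F_k$ at each stage is exactly what makes the separating functionals available in $X^{*}$ itself (through the bipolar/quotient identity above) rather than merely in $X^{***}$, and keeping the gap bounded below by a fixed $\theta$ throughout the induction is the delicate point. This explicit uniform separation is the combinatorial core of the statement and belongs to James's circle of compactness results, which is why the lemma is invoked from \cite{J} rather than reproved here.
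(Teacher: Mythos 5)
The paper does not actually prove this lemma: it is stated with the citation \cite{J} and used as a black box, so there is no internal argument to compare yours against. Your proposal is a correct, essentially self-contained proof of this standard James-type characterization. The direction $(2)\Rightarrow(1)$ is fine as written: Mazur's theorem places a putative weak limit $y$ in the norm-closed convex hull of every tail of the subsequence, and the two approximants $z_1,z_2$ violate the gap at $k=L$; Eberlein--\v{S}mulian then converts the failure of weak sequential compactness of $\{y_n\}$ into non-relative weak compactness of $C$. The direction $(1)\Rightarrow(2)$ is the classical bidual construction and your outline has all the essential ingredients in the right order: a point $\xi\in\overline{C}^{w^*}\setminus X$ with $d=d(\xi,X)>0$ (it exists because the weak$^*$ closure of the bounded set $C$ is weak$^*$ compact and would otherwise be a weakly compact subset of $X$ containing $C$); the identity $d(\xi,F_k)=\sup\{|\xi(f)|:f\in B_{X^*},\ f|_{F_k}=0\}$, which is legitimate precisely because the finite-dimensional $F_k$ is weak$^*$ closed, so the preannihilator duality $(X^{**}/F_k)^*\cong (F_k)_{\perp}\subset X^*$ applies; and the choice of $y_{k+1}\in C$ in the basic weak$^*$ neighbourhood of $\xi$ determined by $f_1,\dots,f_k$, which is possible since $\xi$ lies in the weak$^*$ closure of $C$ (the new point is automatically distinct from the earlier ones, as $f_k$ vanishes on those but exceeds $d-2\varepsilon$ on it). Two details to make explicit in a full write-up: fix $\varepsilon$ (say $\varepsilon=d/4$) once before the induction starts, so that the lower bound $d-2\varepsilon=d/2$ is uniform in $k$; and note that the preliminary extraction of $\{x_n\}$ via Eberlein--\v{S}mulian is dispensable, since the construction only uses weak$^*$ approximation of $\xi$ by elements of $C$.
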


\begin{lemma}
Let $X$ be a Banach space and $\{x_n\}\subset X$ be a bounded sequence. If $\{x_n\}$ has no weakly convergent subsequence. Then exists a subsequence of $\{x_n\}$ satisfying the James condition.
\end{lemma}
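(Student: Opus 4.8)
The plan is to deduce the statement from Lemma 3.9 together with the Eberlein--\v{S}mulian theorem, the only real work being to convert the sequence produced by Lemma 3.9 into a genuine subsequence of $\{x_n\}$. First I would observe that, since the bounded sequence $\{x_n\}$ has no weakly convergent subsequence, the set $C:=\{x_n:n\in\N\}$ fails to be relatively weakly compact: by the Eberlein--\v{S}mulian theorem relative weak compactness of $C$ would force every sequence in $C$, and in particular $\{x_n\}$ itself, to admit a weakly convergent subsequence. Applying Lemma 3.9 to the bounded set $C$ then yields a sequence $\{y_n\}\subset C$ and a constant $\theta>0$ such that $d({\rm co}(y_1,\cdots,y_k),{\rm co}(y_{k+1},y_{k+2},\cdots))\geq\theta$ for every $k\in\N$.

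The remaining step is to promote $\{y_n\}$, which is only a sequence drawn from the set $\{x_n\}$, into an honest subsequence of $\{x_n\}$ while preserving the James condition. I would do this in three moves. First, the terms $y_n$ are pairwise distinct: if $y_i=y_j$ with $i<j$, then choosing $k=i$ puts $y_i\in{\rm co}(y_1,\cdots,y_k)$ and $y_j\in{\rm co}(y_{k+1},y_{k+2},\cdots)$, so the two convex hulls would intersect, contradicting $\theta>0$; hence the map $\phi$ defined by $x_{\phi(n)}=y_n$ is injective. Second, any subsequence of a James sequence is again a James sequence with the same $\theta$: for indices $n_1<n_2<\cdots$ one has ${\rm co}(y_{n_1},\cdots,y_{n_k})\subseteq{\rm co}(y_1,\cdots,y_{n_k})$ and ${\rm co}(y_{n_{k+1}},y_{n_{k+2}},\cdots)\subseteq{\rm co}(y_{n_k+1},y_{n_k+2},\cdots)$, and since shrinking two sets can only enlarge the gap between them, the distance stays $\geq\theta$. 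Third, as $\phi$ is an injection into $\N$, the integers $\phi(n)$ are distinct; by the monotone subsequence principle $\{\phi(n)\}$ has an infinite monotone subsequence, and since an infinite strictly decreasing run of positive integers is impossible, that subsequence is strictly increasing, say $\phi(n_1)<\phi(n_2)<\cdots$ with $n_1<n_2<\cdots$.

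Combining the second and third moves, the sequence $\{y_{n_j}\}_j=\{x_{\phi(n_j)}\}_j$ is at once a subsequence of $\{y_n\}$, hence still satisfies the James condition, and, because the original indices $\phi(n_j)$ strictly increase, a genuine subsequence of $\{x_n\}$, which is exactly what is required. I expect the main obstacle to be precisely this bookkeeping: Lemma 3.9 supplies only a sequence valued in $\{x_n\}$ with no control over the order or multiplicity of the underlying indices, so the crux is to recognize that the James condition is inherited by subsequences and that distinctness of the $y_n$ forbids an infinite decreasing run of indices, which together make the reordering into a true subsequence possible without loss.
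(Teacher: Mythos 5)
Your proof is correct and follows essentially the same route as the paper's: deduce from Eberlein--\v{S}mulian that $\{x_n\}$ is not relatively weakly compact, invoke the James characterization (Lemma 3.8 in the paper, not 3.9 as you cite it) to get a James sequence $\{y_n\}$ in the set $\{x_n\}$, and then reorder it into a genuine subsequence. Your write-up is in fact more careful than the paper's, since you justify the two points the paper merely asserts --- that the $y_n$ are pairwise distinct (forced by $\theta>0$) and that the James condition passes to subsequences via monotonicity of the distance under shrinking the convex hulls.
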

\begin{proof}
Since $\{x_n\}$ has no weakly convergent subsequence, $C\equiv\{x_n\}$ is not relatively weakly compact. Then there exists
a sequence $\{y_n\}\subset C$ such that $\{y_n\}$ satisfying the James condition and $\{y_n\}$ are different from each other. Therefore, $\{y_n\}$ has a subsequence $\{y_{n_k}\}$ such that $\{y_{n_k}\}$ is also a subsequence of $\{x_n\}$.
Obviously, $\{y_{n_k}\}$ satisfies the James condition, and the proof is complete.
\end{proof}

\begin{theorem} Let $C$ and $D$ be two convex subsets of a Banach space $X$. Assume that $C$ is weakly compact and $D$ is simultaneously approximatively weakly compact. Then $C+D$ is simultaneously approximatively weakly compact.
\end{theorem}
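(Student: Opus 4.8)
The plan is to argue by contradiction, using the convex-combination (``blocking'') technique to reduce the weakly compact case to the norm-compact situation already handled in Theorem 3.5. Fix a bounded set $A\subset X$ and an arbitrary minimizing sequence $\{z_n\}=\{c_n+d_n\}\subset C+D$ for $A$, with $c_n\in C$ and $d_n\in D$; note that $\{z_n\}$ and $\{c_n\}$ are bounded, hence so is $\{d_n\}$. Suppose, for contradiction, that $\{z_n\}$ has no weakly convergent subsequence. By Lemma 3.9 we may pass to a subsequence (the James condition is inherited by subsequences with the same constant, since passing to a subsequence only shrinks the convex hulls involved) so that $\{z_n\}$ satisfies the James condition for some $\theta>0$. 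Since $C$ is weakly compact, after a further subsequence we may assume $c_n\stackrel{w}{\rightarrow}c\in C$.

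The main obstacle is that, unlike in Theorem 3.5, weak convergence $c_n\stackrel{w}{\rightarrow}c$ does \emph{not} let us conclude that $\{d_n\}$ is a minimizing sequence for $A-c$, because $\|c_n-c\|$ need not tend to $0$. I would remove this obstruction by replacing $\{c_n\}$ with suitable forward convex combinations. Applying Lemma 3.7(2) to the weakly compact convex set $C$ and the sequence $\{c_n\}$ produces $\tilde c_n=\sum_j\mu_j^{(n)}c_j\in\mathrm{co}\{c_j:j\geq n\}$ with $\tilde c_n\rightarrow c$ in norm (the norm limit must be $c$ since forward convex combinations of a weakly convergent sequence converge weakly to the same limit). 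Using the \emph{same} coefficients, set $\tilde d_n=\sum_j\mu_j^{(n)}d_j\in\mathrm{co}\{d_j:j\geq n\}\subset D$ and $\tilde z_n=\tilde c_n+\tilde d_n=\sum_j\mu_j^{(n)}z_j\in\mathrm{co}\{z_j:j\geq n\}\subset C+D$. A Jensen-type estimate $\sup_{a\in A}\|a-\tilde z_n\|\leq\sum_j\mu_j^{(n)}\sup_{a\in A}\|a-z_j\|$ shows that $\{\tilde z_n\}$ is again minimizing for $A$, while $\tilde c_n\rightarrow c$ in norm together with $c+D\subset C+D$ forces $d(A-c,D)=d(A,C+D)$, so that $\{\tilde d_n\}$ is minimizing for $A-c$ in $D$. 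Now the hypothesis on $D$ applies: $\{\tilde d_n\}$ has a weakly convergent subsequence $\tilde d_{n_k}\stackrel{w}{\rightarrow}d_0\in D$, whence $\tilde z_{n_k}=\tilde c_{n_k}+\tilde d_{n_k}\stackrel{w}{\rightarrow}c+d_0$.

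Finally I would close the contradiction by upgrading this weak convergence to norm convergence along forward blocks. By Mazur's theorem applied to the tails of $\{\tilde z_{n_k}\}$, there are forward convex combinations $\hat z_m\in\mathrm{co}\{\tilde z_{n_k}:k\geq m\}$ with $\hat z_m\rightarrow c+d_0$ in norm; since each $\tilde z_{n_k}\in\mathrm{co}\{z_j:j\geq n_k\}$, we get $\hat z_m\in\mathrm{co}\{z_j:j\geq n_m\}$, i.e.\ a norm-convergent forward-convex-combination sequence of $\{z_j\}$. This is impossible under the James condition: if $\hat z_m$ is supported on indices at most $N_m$, choosing $m'$ with $n_{m'}>N_m$ gives $\hat z_m\in\mathrm{co}\{z_1,\dots,z_{N_m}\}$ and $\hat z_{m'}\in\mathrm{co}\{z_{N_m+1},z_{N_m+2},\dots\}$, so $\|\hat z_m-\hat z_{m'}\|\geq\theta$, contradicting that $\{\hat z_m\}$ is norm-Cauchy. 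Hence $\{z_n\}$ must admit a weakly convergent subsequence $z_{n_j}\stackrel{w}{\rightarrow}z_0$. To place the limit in $C+D$, I would extract (weak compactness of $C$) a further subsequence with $c_{n_j}\stackrel{w}{\rightarrow}\bar c\in C$, so that $d_{n_j}\stackrel{w}{\rightarrow}z_0-\bar c$; since $D$ is convex and norm closed (closedness follows by testing the singleton $A=\{d\}$ against a norm-convergent sequence in $D$), it is weakly closed, giving $z_0-\bar c\in D$ and $z_0\in C+D$. I expect the delicate point to be the bookkeeping ensuring that forward convex combinations of $\{z_n\}$ remain forward combinations after both blocking steps — precisely the feature that makes the James condition bite.
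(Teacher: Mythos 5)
Your proof is correct and follows essentially the same route as the paper's: both hinge on extracting norm-convergent forward convex combinations of $\{c_n\}$ via Lemma 3.7, transferring the same coefficients to the other summand, showing the resulting sequence in $D$ is minimizing for $A-c$, and contradicting the James condition of Lemma 3.9. The only cosmetic differences are in the endgame --- the paper attaches the James condition to $\{d_n\}$ and observes that disjoint forward convex blocks inherit it (then invokes Lemma 3.8), whereas you attach it to $\{c_n+d_n\}$ and close with a Mazur step --- and you additionally verify that the weak limit lies in $C+D$, a point the paper leaves implicit.
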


\begin{proof}
Let $A$ be a bounded subset of $X$. Then there exist a sequence $\{c_n\}\subset C$ and a sequence $\{d_n\}\subset D$ such that
\[\sup\limits_{a\in A}\|a-c_n-d_n\|\rightarrow d(A,C+D).\]
It is sufficient to show $\{c_n+d_n\}$ has a weakly convergent subsequence. By the weak compactness of $C$, it is equivalent to show that $\{d_n\}$ has a weakly convergent subsequence. Without lose of generality, we assume that
$\sup\limits_{a\in A}\|a-c_n-d_n\|\searrow d(A,C+D)$.

Conversely, suppose $\{d_n\}$ has no weakly convergent subsequence, by Lemma 3.9, there exists a subsequence $\{d_{n_k}\}$( we still denote the subsequence $\{d_{n_k}\}$ as $\{d_n\}$) satisfying the James condition. According to  Lemma 3.7, there exists a convergent sequence $y_n\in co\{c_j:j\geq n\}$, i.e., for every $n\in\mathbb{N}$, there exists $\{\lambda_{n,j}\geq0:j\geq n\}$ with $\sum\limits_{j\geq n}\lambda_{n,j}=1$ such that
 $y_n=\sum\limits_{j\geq n}\lambda_{n,j}c_j$, where $\{\lambda_{n,j}>0:j\geq n\}$ is a finite set.
Let $z_n=\sum\limits_{j\geq n}\lambda_{n,j}d_j$, then
\begin{eqnarray}\nonumber
d(A,C+D)\leq\sup\limits_{a\in A}\|a-y_n-z_n\|=\sup\limits_{a\in A}\|\sum\limits_{j\geq n}\lambda_{n,j}(a-c_j-d_j)\|\\\nonumber
\leq\sup\limits_{a\in A}\sum\limits_{j\geq n}\lambda_{n,j}\|a-c_j-d_j\|\leq\sup\limits_{a\in A}\|a-c_n-d_n\|\\\nonumber
\rightarrow d(A,C+D).
\end{eqnarray}
Let $y_n\rightarrow y$, then
\[\sup\limits_{a\in A}\|a-y-z_n\|\leq\sup\limits_{a\in A}\|a-y_n-z_n\|+\|y_n-y\|.\]
This implies that
\[d(A,y+D)\leq\sup\limits_{a\in A}\|a-y-z_n\|\rightarrow d(A,C+D)\leq d(A,y+D).\]
Therefore, $\{z_n\}\subset D$ is a minimizing sequence for $A-y$. By the simultaneously approximatively weak compactness of $D$, there exists a weakly convergent subsequence $\{z_{n_k}\}$ of $\{z_n\}$ satisfying
\[z_{n_k}\in co\{d_j: n_k\leq j<n_{k+1}\},\;\; \text{for all}~ k\in\mathbb{N}.\]
Thus, for all $k\in\mathbb{N}$
\[co(z_{n_1},z_{n_2},\cdots,z_{n_k})\subset co(d_1,d_2,\cdots,d_{n_{k+1}-1}),\]
\[co(z_{n_{k+1}},z_{n_{k+2}},\cdots)\subset co(d_{n_{k+1}},d_{n_{k+1}+1},\cdots).\]
Note that $\{d_n\}$ satisfys the James condition, then $\{z_{n_k}\}$ satisfies the James condition.
By Lemma 3.8, $\{z_{n_k}\}$ is not relatively weakly compact, this contradicts to $\{z_{n_k}\}$ is weakly convergent.
\end{proof}

\section{Sum of simultaneously proximinal subspaces}

In this section, we shall discuss the sum of simultaneously proximinal subspaces. The Lemma 4.1 below is easy to prove, see also\cite[Theorem 5.20]{RU}.
\begin{lemma} Suppose that $E$, $F$  are two closed subspace of  a Banach space $X$ satisfying $E+F$ is closed. Then there
exists $m>0$ such that $B_{E+F}\subset m(B_E+B_F)$.
\end{lemma}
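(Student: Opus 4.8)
The plan is to realize the desired inclusion as a quotient-type estimate furnished by the open mapping theorem, applied to the addition map on the product of $E$ and $F$. Since $E$ and $F$ are closed subspaces of the Banach space $X$, each is complete, so the product space $E\oplus F$ equipped with the norm $\|(e,f)\|=\max\{\|e\|,\|f\|\}$ is again a Banach space, and its closed unit ball is exactly $B_E\times B_F$. The hypothesis that $E+F$ is closed in $X$ guarantees that $E+F$, with the norm inherited from $X$, is itself a Banach space. These completeness observations are precisely what make the open mapping theorem available, and the closedness of $E+F$ is used only here.

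Next I would introduce the addition operator $T\colon E\oplus F\to E+F$, $T(e,f)=e+f$. It is linear, and bounded since $\|T(e,f)\|=\|e+f\|\le\|e\|+\|f\|\le 2\max\{\|e\|,\|f\|\}$, and it is surjective by the very definition of $E+F$. Thus $T$ is a bounded linear surjection between Banach spaces, so the open mapping theorem applies: $T$ is an open map, and hence there is a constant $m>0$ such that every $y\in E+F$ admits a decomposition $y=e+f$ with $(e,f)\in E\oplus F$ and $\max\{\|e\|,\|f\|\}\le m\|y\|$.

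Finally I would read off the stated set inclusion from this norm estimate. Given any $y\in B_{E+F}$, so $\|y\|\le 1$, choose the decomposition $y=e+f$ with $\|e\|\le m\|y\|\le m$ and $\|f\|\le m\|y\|\le m$ provided by the previous step. Then $e\in mB_E$ and $f\in mB_F$, whence $y=e+f\in mB_E+mB_F=m(B_E+B_F)$. Since $y\in B_{E+F}$ was arbitrary, this yields $B_{E+F}\subset m(B_E+B_F)$, as required.

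I expect no genuine obstacle here: the content of the lemma is entirely the open mapping theorem, and the closedness of $E+F$ is exactly the completeness needed to invoke it. The only point requiring a little care is the passage from the open-ball formulation of openness (namely, that $T$ maps the open unit ball onto a neighbourhood of $0$) to the closed-ball inclusion in the statement. This is handled cleanly by phrasing the conclusion as the existence of bounded preimages $\max\{\|e\|,\|f\|\}\le m\|y\|$, which follows from the usual scaling argument and disposes of both the boundary case $\|y\|=1$ and the trivial case $y=0$ without difficulty.
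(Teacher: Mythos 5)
Your proof is correct and is exactly the standard argument behind the result the paper cites (Rudin, Theorem 5.20): apply the open mapping theorem to the bounded surjection $(e,f)\mapsto e+f$ from the Banach space $E\oplus F$ onto the Banach space $E+F$, then rescale. The paper omits the proof and simply refers to Rudin, so your write-up fills in precisely the intended argument with no gaps.
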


\begin{theorem} Let $E$ and $F$ be two subspaces of a Banach space $X$. Assume that $E$ is a finite dimensional subspace and $F$ is a simultaneously $\tau$-strongly proximinal subspace.  Then $E+F$ is simultaneously $\tau$-strongly proximinal.
\end{theorem}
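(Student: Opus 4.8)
The plan is to reduce this statement to Theorem 3.4 (the sum of a compact convex set and a simultaneously $\tau$-strongly proximinal convex set), since $F$ is already a simultaneously $\tau$-strongly proximinal convex set and $E$ is convex. The obstruction is that a finite-dimensional subspace $E$ is not compact, so Theorem 3.4 cannot be applied to $E$ and $F$ directly; instead I would replace $E$ by a suitable closed ball $\lambda B_E$, which is compact because $\dim E<\infty$. Before doing so I would record that $E+F$ is $\tau$-closed: being simultaneously $\tau$-strongly proximinal, $F$ is $\tau$-closed and therefore norm-closed, and the sum of a norm-closed subspace with a finite-dimensional subspace is norm-closed, so $E+F$ is $\tau$-closed for both choices of $\tau$. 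In particular $E+F$ is closed, which is exactly the hypothesis needed to invoke Lemma 4.1.

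Fix a bounded set $A\subset X$ and put $\alpha=d(A,E+F)$. Any $g\in E+F$ with $\sup_{a\in A}\|a-g\|<\alpha+1$ satisfies $\|g\|\le M$ for the uniform constant $M=\alpha+1+\sup_{a\in A}\|a\|$, so by Lemma 4.1 such a $g$ admits a decomposition $g=e+f$ with $\|e\|\le\lambda$ and $\|f\|\le\lambda$, where $\lambda:=mM$ and $m$ is the constant of Lemma 4.1. Consequently $g\in\lambda B_E+F$ whenever $\sup_{a\in A}\|a-g\|<\alpha+1$. Applying this to a minimizing sequence for $A$ gives $d(A,\lambda B_E+F)=d(A,E+F)=\alpha$, because $\lambda B_E+F\subset E+F$ forces the reverse inequality. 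Now $\lambda B_E$ is compact and convex and $F$ is simultaneously $\tau$-strongly proximinal and convex, so Theorem 3.4 shows that $\lambda B_E+F$ is simultaneously $\tau$-strongly proximinal. Since the two distances agree and $\lambda B_E+F\subset E+F$, every best simultaneous approximation of $A$ from $\lambda B_E+F$ is one from $E+F$; thus $P_{E+F}(A)\supset P_{\lambda B_E+F}(A)\neq\emptyset$, which yields the simultaneous proximinality of $E+F$.

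For the strong part, fix a $\tau$-neighbourhood $V$ of $0$ and let $\delta_0>0$ be the number supplied by the simultaneous $\tau$-strong proximinality of $\lambda B_E+F$, so that $P_{\lambda B_E+F}(A,\delta_0)\subset P_{\lambda B_E+F}(A)+V$. Set $\delta=\min\{\delta_0,1\}$. If $g\in P_{E+F}(A,\delta)$, then $\sup_{a\in A}\|a-g\|<\alpha+\delta\le\alpha+1$, so by the previous paragraph $g\in\lambda B_E+F$, and since $d(A,\lambda B_E+F)=\alpha$ this gives $g\in P_{\lambda B_E+F}(A,\delta)\subset P_{\lambda B_E+F}(A,\delta_0)$. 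Chaining the inclusions, $P_{E+F}(A,\delta)\subset P_{\lambda B_E+F}(A)+V\subset P_{E+F}(A)+V$, which is the required estimate. The only genuinely delicate point is the localization step: one must choose $\lambda$ uniformly over all near-minimizers (not merely along one minimizing sequence) so that a single compact ball $\lambda B_E$ controls every element of $P_{E+F}(A,\delta)$, and this is precisely what the uniform bound $M$ together with Lemma 4.1 secures.
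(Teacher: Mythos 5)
Your proposal is correct and follows essentially the same route as the paper: use Lemma 4.1 to show that all near-minimizers of $A$ in $E+F$ lie in a single compact ball plus $F$, so that $d(A,\lambda B_E+F)=d(A,E+F)$, then invoke Theorem 3.4 for $\lambda B_E+F$ and transfer the conclusion back to $E+F$. Your version is in fact slightly more careful than the paper's at the point where it identifies $P_{\lambda m B_E+F}(A,\delta)$ with $P_{E+F}(A,\delta)$, since you make explicit that this requires $\delta\le 1$ and the uniform bound on near-minimizers.
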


\begin{proof}
Since $E$ is finite dimensional and $F$ is $\tau$-closed, $E+F$ is closed. According to Lemma 4.1, there exists $m>0$ such that
\[B_{E+F}\subset m(B_E+B_F)\subset mB_E+F.\]
Therefore, for a bounded subset $A\subset X$, there exists a $\lambda>d(A,E+F)+\sup\limits_{a\in A}\|a\|+1$ such that
\[d(A,E+F)=d(A,\lambda(mB_E+F))=d(A,\lambda mB_E+F).\]
Since $\lambda mB_E$ is compact and $F$ is simultaneously $\tau$-strongly proximinal, by Theorem 3.4, $\lambda mB_E+F$ is simultaneously $\tau$-strongly proximinal. Thus, for any $\tau$-neighbourhood $V$ of $0$, there exists a $0<\delta<1$, such that
\[P_{\lambda mB_E+F}(A,\delta)\subset P_{\lambda mB_E+F}(A)+V.\]
Note that
\[P_{\lambda mB_E+F}(A,\delta)=P_{E+F}(A,\delta),\;\;\;P_{\lambda mB_E+F}(A)=P_{E+F}(A).\]
Thus,
\[P_{E+F}(A,\delta)\subset P_{E+F}(A)+V.\]
By the arbitrariness of $A$, $E+F$ is simultaneously $\tau$-strongly proximinal in $X$.
\end{proof}

\begin{theorem} Let $E$ and $F$ be two subspaces of a Banach space $X$. Assume that $E$ is finite dimensional and $F$ is simultaneously approximatively $\tau$-compact. Then $E+F$ is simultaneously approximatively $\tau$-compact.
\end{theorem}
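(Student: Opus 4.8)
The plan is to reduce this statement to the already-proved Theorem~3.5 (the sum of a compact convex set and a simultaneously approximatively $\tau$-compact set is simultaneously approximatively $\tau$-compact), by mimicking the reduction used in the proof of Theorem~4.2. First I would note that since $E$ is finite dimensional and $F$ is $\tau$-closed, the sum $E+F$ is closed; this is the standard fact that the algebraic sum of a finite-dimensional subspace and a closed subspace is closed. Then I would invoke Lemma~4.1 to obtain a constant $m>0$ with
\[
B_{E+F}\subset m(B_E+B_F)\subset mB_E+F.
\]

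The key maneuver is to localize the problem on a large ball. For a fixed bounded set $A\subset X$, any minimizing sequence $\{u_n\}\subset E+F$ for $A$ is bounded, say $\|u_n\|\le R$ for all $n$, so it lies in $R\,B_{E+F}\subset Rm\,B_E+F$. Choosing $\lambda>0$ large enough (for instance $\lambda>Rm$, or more safely $\lambda$ exceeding $m$ times any a priori bound $d(A,E+F)+\sup_{a\in A}\|a\|+1$ on the norm of minimizing elements), I would arrange that
\[
d(A,E+F)=d(A,\lambda m B_E+F)\quad\text{and}\quad \{u_n\}\subset \lambda m B_E+F,
\]
so that $\{u_n\}$ is in fact a minimizing sequence for $A$ relative to the set $\lambda m B_E+F$. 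Here $\lambda m B_E$ is a compact convex set (finite-dimensional closed ball) and $F$ is simultaneously approximatively $\tau$-compact, so Theorem~3.5 applies to $\lambda m B_E+F$.

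Applying Theorem~3.5, the minimizing sequence $\{u_n\}$ has a subsequence $\tau$-convergent to some point of $\lambda m B_E+F\subset E+F$. Since $\{u_n\}$ was an arbitrary minimizing sequence for an arbitrary bounded $A$, this shows $E+F$ is simultaneously approximatively $\tau$-compact, completing the proof. The argument is almost entirely parallel to Theorem~4.2, with Theorem~3.5 substituted for Theorem~3.4.

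The main obstacle I anticipate is the bookkeeping that guarantees $d(A,E+F)=d(A,\lambda m B_E+F)$ while simultaneously capturing the given minimizing sequence inside $\lambda m B_E+F$. One must choose $\lambda$ uniformly, depending only on $A$ (and $m$), large enough that (i) the truncated ball $\lambda m B_E$ does not cut off the infimum defining $d(A,E+F)$, and (ii) every relevant minimizing element has its $E$-component controlled, so that $\{u_n\}$ genuinely sits in $\lambda m B_E+F$. The cleanest route is to bound the norms of a minimizing sequence first (they converge, hence are bounded by some $R$), and then pick $\lambda$ exceeding the resulting threshold; the inclusion $B_{E+F}\subset mB_E+F$ then forces each $u_n=e_n+f_n$ into $\lambda m B_E+F$ after absorbing the $F$-part, which is exactly the step where Lemma~4.1 is essential.
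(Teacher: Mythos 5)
Your proposal is correct and follows essentially the same route as the paper's proof: both bound the given minimizing sequence, use Lemma~4.1 to place it inside a set of the form $kmB_E+F$ without changing the distance to $A$, and then invoke Theorem~3.5 for the compact convex set $kmB_E$ plus $F$. The only cosmetic difference is that the paper takes the radius directly from the bound $k$ on the minimizing sequence rather than introducing a separate threshold $\lambda$.
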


\begin{proof}
Let $A$ be a bounded subset of $X$. suppose $\{e_n\}\subset E$ and  $\{f_n\}\subset F$ such that
\[\sup\limits_{a\in A}\|a-e_n-f_n\|\rightarrow d(A,E+F).\]
Thus, $\{e_n+f_n\}$ is a bounded sequence in $E+F$, and $\{e_n+f_n\}\subset kB_{E+F}$ for some $k>0$.  Since $E$ is finite dimensional and $F$ is closed, $E+F$ is closed.
According to Lemma 4.1, there exists $m>0$ such that
\[\{e_n+f_n\}\subset kB_{E+F}\subset km(B_E+B_F)\subset kmB_E+F.\]
Therefore,
\[\sup\limits_{a\in A}\|a-e_n-f_n\|\rightarrow d(A,E+F)=d(A,kmB_E+F).\]
Since $kmB_E$ is compact and $F$ is simultaneously approximatively  $\tau$-compact, by Theorem 3.5, $kmB_E+F$ is simultaneously approximatively $\tau$-compact. Then $\{e_n+f_n\}$ has a $\tau$-convergent subsequence and the proof is complete.
\end{proof}

\begin{theorem} Let $E$ and $F$ be two subspaces of a Banach space $X$. Assume that $E$ is reflexive and $F$ is simultaneously approximatively weakly compact, satisfying $E+F$ is closed. Then $E+F$ is simultaneously approximatively weakly compact.
\end{theorem}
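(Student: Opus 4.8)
The plan is to follow the template of Theorem 4.3, replacing the norm-compactness of the finite-dimensional ball by the weak compactness of $B_E$ and invoking Theorem 3.10 where Theorem 3.5 was used.

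First I would fix a bounded subset $A\subset X$ and select sequences $\{e_n\}\subset E$ and $\{f_n\}\subset F$ with $\sup\limits_{a\in A}\|a-e_n-f_n\|\rightarrow d(A,E+F)$; the goal is then to extract a weakly convergent subsequence of $\{e_n+f_n\}$. Since $A$ is bounded and $\sup\limits_{a\in A}\|a-e_n-f_n\|$ converges, the sequence $\{e_n+f_n\}$ is bounded, so $\{e_n+f_n\}\subset kB_{E+F}$ for some $k>0$. As $E$ is reflexive (hence closed), $F$ is closed, and $E+F$ is closed by hypothesis, Lemma 4.1 furnishes $m>0$ with $B_{E+F}\subset m(B_E+B_F)$; because $F$ is a subspace this gives
\[\{e_n+f_n\}\subset kB_{E+F}\subset km(B_E+B_F)\subset kmB_E+F.\]

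Next I would check that restricting to this smaller set leaves the distance unchanged. On one hand $kmB_E+F\subset E+F$ yields $d(A,E+F)\leq d(A,kmB_E+F)$; on the other hand $\{e_n+f_n\}\subset kmB_E+F$ gives $d(A,kmB_E+F)\leq\sup\limits_{a\in A}\|a-e_n-f_n\|\rightarrow d(A,E+F)$. Hence $d(A,kmB_E+F)=d(A,E+F)$ and $\{e_n+f_n\}$ is in fact a minimizing sequence for $kmB_E+F$. Now reflexivity of $E$ makes $B_E$, and thus $kmB_E$, weakly compact and convex, while $F$ is simultaneously approximatively weakly compact and convex; Theorem 3.10 then shows that $kmB_E+F$ is simultaneously approximatively weakly compact. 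Applying the definition to the minimizing sequence $\{e_n+f_n\}$ produces a subsequence converging weakly to a point of $kmB_E+F\subset E+F$, which is exactly the conclusion.

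The routine steps (boundedness of $\{e_n+f_n\}$ and the convexity bookkeeping needed for Theorem 3.10) are immediate. The single genuine input is the localization of the problem to a bounded ball in $E$, and here the hypothesis that $E+F$ is closed is indispensable: it is precisely what makes Lemma 4.1 applicable, and without closedness the inclusion $kB_{E+F}\subset kmB_E+F$ can fail. I therefore expect no real obstacle beyond being careful that the weak limit lands back in $E+F$, which is automatic since $kmB_E+F\subset E+F$.
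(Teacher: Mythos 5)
Your proof is correct and is exactly the argument the paper intends: the paper's own proof of this theorem just says to repeat the proof of Theorem 4.3 with weak compactness in place of compactness and Theorem 3.10 in place of Theorem 3.5, which is precisely what you have written out in detail.
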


\begin{proof}
The proof is similar to Theorem 4.3, it is sufficient to substitute weak compactness for compactness and substitute simultaneously approximatively weak compactness for simultaneously approximatively compactness.

\end{proof}

{\bf Acknowledgements.} The authors would like to thank the referee for the comments and advice which made this article better and Functional
Analysis group of Xiamen university for their very helpful conversations and suggestions.

\bibliographystyle{amsalpha}

\begin{thebibliography}{AcBeRu}

\bibitem {B} P. Bandyopadhyay, Y. Li, B. Lin, D. Narayana, Proximinility in Banach spaces, J. Math. Anal. Appl. 341(2008), 309-317.
\bibitem {C} E. W. Cheney, D. E. Wulbert, The existence and unicity of best approximation, Math. Scand. 24(1969), 113-140.
\bibitem {CCL} L. X. Cheng, Q. J. Cheng, Z. H. Luo, On some new characterizations of weakly compact sets in Banach spaces. Studia Math. 201(2010), 155-166.
\bibitem {D} W. Deeb, R. Khalil, The sum of proximinal subspaces. Soochow J. Math. 18(1992), 163-167.
\bibitem {DS} S. Dutta, P. Shunmugaraj, Strong proximinality of closed convex sets. J. Approx. Theory. 163(2011), 547-553.
\bibitem {F} M. Feder, On the sum of proximinal subspaces, J. Approx. Theory. 49(1987), 144-148.
\bibitem {G} S. Gupta, T. D. Narang, Simultaneous strong proximinality in Banach spaces. Turk. J. Math. 41(2017), 725-732.
\bibitem {J} R. C. James, Weak compactness and reflexivity. Israel J. Math. 2(1964), 101-119.
\bibitem {L} P. K. Lin, A remark on the sum of proximinal subspces. J. Approx. Theory. 58(1989), 55-57.
\bibitem {Ma} J. Mach, Best simultaneous approximation of bounded functions  with values in certain Banach spaces. Math. Ann. 240(1979), 157-164.
\bibitem {M} Q. F. Meng, Z. H. Luo, H.A. Shi, A remark on the sum of simultaneously proximina subspaces. J. Xiamen Univ. Nat. Sci. 56(4)(2017), 551-554.( in Chinese)
\bibitem {N} T. D. Narang, Simultaneous approximation and Chebyshev centers in metric spaces, Matematicki Vesnik. 51(1999), 61-68.
\bibitem {P} I. A. Pyatyshev, Operations on approximatively compact sets. J. Math. Notes 82(2007), 653-659.
\bibitem {rao1} T. S. S. R. K. Rao, Simultaneously proximinal subspaces, J. Appl.Anal, 22(2)
(2016),115-120.
\bibitem {rao2} T. S. S. R. K. Rao, Points of strong subdifferentiability in dual spaces, Houston J. Math. 44(4)(2018), 1221-1226.       
\bibitem {R} M. Rawashdeh, S. Al-Sharif, W. B. Domi, On the sum of simultaneously proximinal subspaces. Hacet. J. Math. Stat. 43(2014), 595-602.
\bibitem {RU} W. Rudin, Functional Analysis. 2nd ed. New York: McGraw-Hill Inc, 1991: 137.
\bibitem {S} F. Saidi, D. Hussein, R. Khalil, Best simultaneous approximation in $L^P(I,X)$. J. Approx. Theory. 116(2002), 369-379.
\end{thebibliography}

\end{document}